\documentclass{amsart}
\usepackage[all]{xy}
\usepackage{enumerate}
\usepackage{mathrsfs}
\usepackage{amssymb}
\usepackage{graphicx}
\numberwithin{equation}{subsection}

\theoremstyle{plain}
\newtheorem{theorem}{Theorem}[section]
\newtheorem{lemma}[theorem]{Lemma}
\newtheorem{corollary}[theorem]{Corollary}

\theoremstyle{definition}

\theoremstyle{remark}
\newtheorem{remark}[theorem]{Remark}

\newcommand\Aut{\operatorname{Aut}}
\newcommand\Out{\operatorname{Out}}

\newcommand\Hom{\operatorname{Hom}}

\newcommand\GL{\operatorname{GL}}

\newcommand\IA{\operatorname{IA}}
\newcommand\IO{\operatorname{IO}}

\newcommand\id{\operatorname{id}}
\newcommand\pr{\operatorname{pr}}

\newcommand\im{\operatorname{im}}

\newcommand\Z{\mathbb{Z}}
\newcommand\Q{\mathbb{Q}}
\newcommand\Com{\mathcal{C}om}
\newcommand\calP{\mathcal{P}}
\newcommand\calC{\mathcal{C}}
\newcommand\calO{\mathcal{O}}
\newcommand\calG{\mathcal{G}}
\newcommand\wBr{\mathsf{wBr}}
\newcommand\dwBr{\mathsf{dwBr}}
\newcommand\gpS{\mathfrak{S}}
\newcommand\ab{\operatorname{ab}}

\newcommand\calA{\mathcal{A}}
\newcommand\calH{\mathcal{H}}

\newcommand\Prim{\operatorname{Prim}}

\title[Albanese homology of the IA-automorphism groups of free groups]{The stable Albanese homology of \\
the IA-automorphism groups of free groups}
\author{Mai Katada}
\date{April 24, 2024}
\address{Faculty of Mathematics, Kyushu University, Fukuoka 819-0395, Japan}
\email{katada@math.kyushu-u.ac.jp}
\begin{document}

\begin{abstract}
The IA-automorphism group $\IA_n$ of the free group $F_n$ of rank $n$ is a normal subgroup of the automorphism group $\Aut(F_n)$ of $F_n$.
We study the Albanese homology of $\IA_n$, which is the quotient of the rational homology of $\IA_n$ defined as the image of the map induced by the abelianization map of $\IA_n$ on homology.
The Albanese homology of $\IA_n$ is an algebraic $\GL(n,\Q)$-representation.
We determine the representation structure of the Albanese homology of $\IA_n$ for $n$ greater than or equal to three times the homological degree. 
We also determine the structure of the stable Albanese homology of the analogue of $\IA_n$ to the outer automorphism group of $F_n$.
Moreover, we identify the relation between the stable Albanese (co)homology of $\IA_n$ and the stable cohomology of $\Aut(F_n)$ with certain twisted coefficients.
\end{abstract}
\maketitle
\section{Introduction}

The IA-automorphism group $\IA_n$ of the free group $F_n$ of rank $n$ is the normal subgroup of the automorphism group $\Aut(F_n)$ of $F_n$ that is trivial under the canonical group homomorphism from $\Aut(F_n)$ to the general linear group $\GL(n,\Z)$ induced by the abelianization map of $F_n$.
Then we have a short exact sequence of groups 
\begin{gather*}
    1\to \IA_n\to \Aut(F_n)\to \GL(n,\Z)\to 1.
\end{gather*}
By this short exact sequence, the (co)homology of $\IA_n$ admits an action of $\GL(n,\Z)$.
The IA-automorphism group $\IA_n$ is analogous to the \emph{Torelli groups} for surfaces, which are important objects in low-dimensional topology.
Some strategies of studying the (co)homology of the Torelli groups can be used to study the (co)homology of $\IA_n$ and vice versa.

The structure of the first (co)homology was determined by Cohen--Pakianathan, Farb (both unpublished) and Kawazumi \cite{Kawazumi}, independently.
The \emph{Johnson homomorphism} for $\Aut(F_n)$ induces an isomorphism
\begin{gather*}
    H_1(\IA_n,\Z)\xrightarrow{\cong} \Hom(H_{\Z},{\bigwedge}^2 H_{\Z}),\quad H_{\Z}=H_1(F_n,\Z).
\end{gather*}
For $n=3$, it is known that $\IA_3$ is not finitely presentable by Krsti\'{c}--McCool \cite{KM} and $H_2(\IA_3,\Z)$ has infinite rank by Bestvina--Bux--Margalit \cite{BBM}.
Pettet \cite{Pettet} determined the $\GL(n,\Z)$-subrepresentation of $H^2(\IA_n,\Q)$ that is detected by using the Johnson homomorphism, which is regarded as the second Albanese cohomology $H^2_A(\IA_n,\Q)$ of $\IA_n$ explained below.
Satoh \cite{Satoh2021} detected an irreducible
subrepresentation of $H^2(\IA_3,\Q)$ which is not included in $H_A^2(\IA_3,\Q)$.
For $n\ge 4$, it is still open whether $\IA_n$ is finitely presentable or not.
However, it is known that $H_2(\IA_n,\Z)$ is finitely generated as a $\GL(n,\Z)$-representation by Day--Putman \cite{DP}.

In a stable range, that is, for sufficiently large $n$ with respect to the (co)homological degree, the structure of the rational (co)homology of $\IA_n$ has been studied \cite{Katada, HK, LindellK}, and we have a conjectural structure of the stable rational (co)homology of $\IA_n$ (see Theorem \ref{HKtheorem}).

The main interest of this paper is a subalgebra of the rational cohomology of $\IA_n$ which seems to play an essential role in the stable rational cohomology of $\IA_n$.
The subalgebra is defined to be the image of the map induced by the abelianization map of $\IA_n$:
\begin{gather*}
    H_A^*(\IA_n,\Q)=\im (H^*(\IA_n^{\ab},\Q)\to H^*(\IA_n,\Q)).
\end{gather*}
Church--Ellenberg--Farb \cite{CEF} called $H_A^*(\IA_n,\Q)$ the \emph{Albanese cohomology} of $\IA_n$.
The \emph{Albanese homology} $H^A_*(\IA_n,\Q)$ is predual to the Albanese cohomology defined by
\begin{gather*}
     H^A_*(\IA_n,\Q)=\im (H_*(\IA_n,\Q)\to H_*(\IA_n^{\ab},\Q)).
\end{gather*}
It follows from the definition of the Albanese (co)homology of $\IA_n$ and the computation of the first homology of $\IA_n$ that the Albanese (co)homology is an \emph{algebraic $\GL(n,\Q)$-representation}.
The second and the third Albanese homology of $\IA_n$ was determined by Pettet \cite{Pettet} and the author \cite{Katada}, respectively.
Moreover, in \cite{Katada}, the author detected a large subquotient $\GL(n,\Q)$-representation $W_i$ of $H^A_i(\IA_n,\Q)$ for each $n\ge 3i$, and conjectured that $H^A_i(\IA_n,\Q)$ is isomorphic to $W_i$ for $n\ge 3i$.

The aim of this paper is to prove this conjecture on the representation structure of the Albanese homology of $\IA_n$.

\begin{theorem}[Theorem \ref{TheoremAlbaneseIAn}, cf.\;{\cite[Conjecture 6.2]{Katada}}]\label{intromaintheorem}
    We have an isomorphism of $\GL(n,\Q)$-representations
    \begin{gather*}
        F_i: H_i^A(\IA_n,\Q)\xrightarrow{\cong} W_i
    \end{gather*}
    for $n\ge 3i$.
\end{theorem}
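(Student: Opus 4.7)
The plan is to prove the conjectured isomorphism by upgrading the inclusion $W_i \hookrightarrow H^A_i(\IA_n, \Q)$ constructed in \cite{Katada} for $n \ge 3i$ to a bijection. Since $W_i$ is known explicitly as a subquotient of $H_i(\IA_n^{\ab}, \Q) = \bigwedge^i \Hom(H_\Q, \bigwedge^2 H_\Q)$, the theorem reduces to showing that, as algebraic $\GL(n, \Q)$-representations, the two sides have the same isotypic multiplicities in the stated range.

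To obtain the matching upper bound, I would exploit the link, flagged in the abstract, between the stable Albanese (co)homology of $\IA_n$ and the stable cohomology of $\Aut(F_n)$ with algebraic twisted coefficients. For an algebraic $\GL(n, \Q)$-representation $V$, the Hochschild--Serre spectral sequence
\begin{gather*}
    E_2^{p,q} = H^p(\GL(n, \Z), V \otimes H^q(\IA_n, \Q)) \Rightarrow H^{p+q}(\Aut(F_n), V),
\end{gather*}
combined with Borel-type vanishing for the rational cohomology of $\GL(n, \Z)$ with nontrivial algebraic twisted coefficients in a stable range, should collapse onto the bottom row when restricted to the algebraic part, identifying this part of $H^*(\Aut(F_n), V)$ with $(V \otimes H^*_A(\IA_n, \Q))^{\GL(n, \Q)}$. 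The left-hand side has been computed in prior work by Djament--Vespa, Randal-Williams, and others, and by letting $V$ range over irreducible algebraic representations, Schur--Weyl duality isolates each isotypic component of $H^A_i(\IA_n, \Q)$. A direct combinatorial comparison of multiplicities against the explicit description of $W_i$ then closes the argument, and the map $F_i$ is obtained from the canonical factorization of $H_i(\IA_n, \Q) \to H_i(\IA_n^{\ab}, \Q)$ through its image.

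The main obstacle is the combinatorial bookkeeping required to match the Schur-functor multiplicities extracted from the stable cohomology of $\Aut(F_n)$ with those of the subquotient $W_i$, together with verifying that the stable range $n \ge 3i$ is sufficient for all invocations of polynomiality, Borel vanishing, and representation stability used in the argument. A secondary difficulty is to ensure that the non-Albanese (that is, non-algebraic) part of $H^*(\IA_n, \Q)$ does not contaminate the relevant algebraic piece of the Hochschild--Serre computation, so that the upper bound one obtains is genuinely a bound on $H^A_i(\IA_n, \Q)$ rather than on the full cohomology.
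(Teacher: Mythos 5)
Your strategy takes the lower bound $W_i\hookrightarrow H^A_i(\IA_n,\Q)$ from \cite{Katada} as given (as does the paper, via Theorem \ref{TheoremAlbaneseinclusion}), so the whole content is the upper bound, and here your proposed route does not work unconditionally. You want to read off $[H^*_A(\IA_n,\Q)\otimes V]^{\GL(n,\Z)}$ from $H^*(\Aut(F_n),V)$ using the Hochschild--Serre spectral sequence for $1\to\IA_n\to\Aut(F_n)\to\GL(n,\Z)\to 1$ together with Borel vanishing. But the $E_2$ page involves $H^p(\GL(n,\Z), V\otimes H^q(\IA_n,\Q))$ with the \emph{full} cohomology $H^q(\IA_n,\Q)$, not its Albanese part. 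Borel vanishing applies to algebraic coefficients; if $H^q(\IA_n,\Q)$ contains non-algebraic pieces, there is no control on the corresponding $E_2$ entries, and the spectral sequence need not collapse onto $(V\otimes H^*_A(\IA_n,\Q))^{\GL(n,\Z)}$. This is precisely the unproven assumption in Theorem \ref{HKtheorem} (``Suppose that $H^i(\IA_n,\Q)$ is an algebraic $\GL(n,\Q)$-representation for $n\gg i$''), which remains open. What you flag as a ``secondary difficulty'' is in fact the fatal obstruction: the argument is either conditional on this algebraicity, or circular.

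The paper avoids the spectral sequence entirely. It uses Lindell's explicit quadratic presentation $R_{\mathrm{pres}}\cong H^*(U_1,\Q)/\langle R_2\rangle$, which surjects onto $H^*_A(\IA_n,\Q)$ (since the Albanese cohomology is generated in degree $1$ and $R_2$ consists of relations that hold there), together with his surjection $\alpha:\calG(p,q)\twoheadrightarrow [R_{\mathrm{pres}}\otimes H^{p,q}]^{\GL(n,\Z)}$ from the graph complex $\calG(p,q)\cong(\calP\otimes\det)(p,q)$. This gives an upper bound on $[H^*_A(\IA_n,\Q)\otimes H^{p,q}]^{\GL(n,\Z)}$ that is purely combinatorial, independent of $n$, and independent of any structural input on the full $H^*(\IA_n,\Q)$. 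Matching this upper bound against the lower bound $[(W_i)^*\otimes H^{2i,i}]^{\GL(n,\Z)}$ via Lemma \ref{lemmaW*inv} (an invariant-theory computation valid for $n\ge 3i$) and observing $H^A_i(\IA_n,\Q)\subset H^{2i,i}$ closes the argument. If you want to salvage your approach, the fix is exactly to replace the Hochschild--Serre/Borel step by this presentation-theoretic upper bound; without that replacement the proof does not go through.
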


It follows from Theorem \ref{intromaintheorem} that the Albanese homology of $\IA_n$ is representation stable in $n\ge 3i$ in the sense of Church--Farb \cite{CFrep}.

The author received a draft version of \cite{LindellK} by Erik Lindell and noticed that some reinterpretation of \cite[Proposition 6.3]{LindellK} can be used to determine the structure of the stable Albanese homology of $\IA_n$.
In the appendix of \cite{LindellK}, she proved the statement of Theorem \ref{intromaintheorem} only for $n\gg i$.

Habiro and the author \cite{HK} studied the structure of the stable rational cohomology of $\IA_n$.
By using Theorem \ref{intromaintheorem}, we can remove from one of the main results of \cite{HK} (cf. \cite[Theorem 1.10 and Remark 7.9]{HK}) the assumption about the structure of the Albanese homology of $\IA_n$, and obtain the following theorem.

\begin{theorem}[Cf.\;{\cite[Theorem 1.10 and Remark 7.9]{HK}}]\label{HKtheorem}
   Suppose that $H^i(\IA_n,\Q)$ is an algebraic $\GL(n,\Q)$-representation for $n\gg i$.
   Then for $n\gg i$, we have
   $$H^i(\IA_n,\Q)\cong \bigoplus_{k+l=i}W_{k}^*\otimes \Q[z_1,z_2,\cdots]_{l},$$
   where $\Q[z_1,z_2,\cdots]_{l}$ denotes the degree $l$ part of $\Q[z_1,z_2,\cdots]$ and $\deg z_j=4j$.
\end{theorem}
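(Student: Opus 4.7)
The plan is to obtain Theorem \ref{HKtheorem} as an immediate corollary of Theorem \ref{intromaintheorem} combined with the main structural result of \cite{HK}. Indeed, the ``Cf.'' in the statement already signals that the content of the theorem is simply the conclusion of \cite[Theorem 1.10]{HK} with a strictly weaker hypothesis, made available by the new input from Theorem \ref{intromaintheorem}.

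First I would recall the precise form of \cite[Theorem 1.10]{HK}: under the two standing hypotheses (i) that $H^i(\IA_n,\Q)$ is an algebraic $\GL(n,\Q)$-representation for $n\gg i$, and (ii) that the stable Albanese homology of $\IA_n$ is given by $H_i^A(\IA_n,\Q)\cong W_i$ in a stable range, one has the decomposition
\begin{gather*}
H^i(\IA_n,\Q)\cong \bigoplus_{k+l=i}W_{k}^*\otimes \Q[z_1,z_2,\cdots]_{l},\quad \deg z_j=4j,
\end{gather*}
for $n\gg i$. As pointed out in \cite[Remark 7.9]{HK}, the role of hypothesis (ii) in the proof is exactly as a black box: one only needs the abstract isomorphism $H_i^A(\IA_n,\Q)\cong W_i$ of $\GL(n,\Q)$-representations in a stable range, without any finer structural information.

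Now I would invoke Theorem \ref{intromaintheorem}, which establishes the isomorphism $H_i^A(\IA_n,\Q)\cong W_i$ unconditionally for $n\ge 3i$. Since the range $n\ge 3i$ is contained in any range of the form ``$n\gg i$'', substituting this identification into \cite[Theorem 1.10]{HK} allows one to simply delete hypothesis (ii) from the statement, leaving only the algebraicity assumption (i), which is exactly the hypothesis retained in Theorem \ref{HKtheorem}.

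I do not expect a serious obstacle here; the argument is essentially bookkeeping. The only point one should verify — and which is addressed by \cite[Remark 7.9]{HK} — is that the proof in \cite{HK} truly uses the Albanese identification as a black box and imposes no stronger stable range than what Theorem \ref{intromaintheorem} provides. Once this is checked, the deduction is immediate.
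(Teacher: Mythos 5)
Your proposal is exactly the argument the paper uses: Theorem \ref{HKtheorem} is presented as an immediate consequence of Theorem \ref{intromaintheorem} together with \cite[Theorem 1.10 and Remark 7.9]{HK}, with no independent proof given beyond observing that the new Albanese identification removes the corresponding hypothesis. Nothing is missing.
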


Lindell \cite{LindellK} has recently weakened the assumption of Theorem \ref{HKtheorem} that the family $\{H^*(\IA_n,\Q)\}_n$ is algebraic for $n\gg *$ to the assumption that $\{H^*(\IA_n,\Q)\}_n$ satisfies \emph{Borel vanishing} for $n\gg *$ (see \cite[Definition 1.5]{LindellK}).

In this paper, we will prove Theorem \ref{intromaintheorem} by taking care of the stable range. 
It follows from the proof of Theorem \ref{intromaintheorem} that the Albanese cohomology algebra is quadratic for $n\ge 3*$.
We also prove several related conjectures which are proposed in \cite{Katada}.
In particular, we will determine the structure of the Albanese homology of the analogue $\IO_n$ of $\IA_n$ to the outer automorphism group of $F_n$ for $n\ge 3*$.

We will also prove the conjecture on the relation between the stable Albanese cohomology of $\IA_n$ and the stable cohomology of $\Aut(F_n)$ with coefficients in the tensor product $H^{p,q}$ of $p$ copies of the standard representation $H=H_1(F_n,\Q)$ of $\GL(n,\Q)$ and $q$ copies of the dual representation $H^*$.

\begin{theorem}[Theorem \ref{TheoremAutIA}, cf.\;{\cite[Conjecture 7.2]{HK}}]
    The inclusion map $i: \IA_n\hookrightarrow \Aut(F_n)$ induces an isomorphism of $\Q[\gpS_{p}\times \gpS_{q}]$-modules
    \begin{gather*}
        i^*: H^*(\Aut(F_n),H^{p,q})\xrightarrow{\cong} [H_A^*(\IA_n,\Q)\otimes H^{p,q}]^{\GL(n,\Z)}
    \end{gather*}
    for $n\ge \min(\max(3*+4, p+q), 2*+p+q+3)$.
\end{theorem}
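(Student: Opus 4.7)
The natural tool is the Hochschild--Serre spectral sequence associated with the extension
\[
1\to \IA_n\to \Aut(F_n)\to \GL(n,\Z)\to 1.
\]
Because $H^{p,q}$ is pulled back from $\GL(n,\Z)$, the inner action is trivial and we obtain
\[
E_2^{s,t}=H^s\bigl(\GL(n,\Z),\,H^t(\IA_n,\Q)\otimes H^{p,q}\bigr) \;\Rightarrow\; H^{s+t}(\Aut(F_n),H^{p,q}),
\]
in which $i^*$ is the edge map into the column $s=0$. The theorem then reduces to two statements in the stable range: (A) the spectral sequence collapses at $E_2$ onto the column $s=0$, and (B) the invariants $\bigl[H^*(\IA_n,\Q)\otimes H^{p,q}\bigr]^{\GL(n,\Z)}$ coincide with $\bigl[H_A^*(\IA_n,\Q)\otimes H^{p,q}\bigr]^{\GL(n,\Z)}$. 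Equivariance for $\gpS_p\times \gpS_q$ will be automatic throughout, since this group acts on $H^{p,q}$ by permuting tensor factors and commutes with everything else.

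For (A) I would invoke Borel's stability/vanishing theorem: for an algebraic $\GL(n,\Q)$-representation $V$ of weight $w$, $H^s(\GL(n,\Z),V)=0$ for $s>0$ once $n$ exceeds roughly $2s+w$. By Theorem \ref{intromaintheorem} the Albanese piece $H_A^t(\IA_n,\Q)$ is algebraic with weight bounded linearly in $t$ for $n\ge 3t$, and $H^{p,q}$ contributes weight $p+q$. For the non-Albanese complement, whose algebraicity is not known a priori, I would appeal to Lindell's recent Borel-vanishing property for $\{H^*(\IA_n,\Q)\}_n$ in the stable range \cite{LindellK}.

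For (B) the key point is that, once tensored with the bounded-weight algebraic module $H^{p,q}$, $\GL(n,\Z)$-invariants detect only the algebraic part of $H^*(\IA_n,\Q)$. I would analyze the short exact sequence
\[
0\to H_A^*(\IA_n,\Q)\to H^*(\IA_n,\Q)\to H^*(\IA_n,\Q)/H_A^*(\IA_n,\Q)\to 0,
\]
tensor with $H^{p,q}$, and take $\GL(n,\Z)$-invariants. The Albanese sub-representation has known structure by Theorem \ref{intromaintheorem}, and Borel density identifies its invariants with $\GL(n,\Q)$-invariants, producing the right-hand side of the theorem; the quotient must then be shown to contribute no invariants in the stable range, using the reinterpretation of \cite[Proposition 6.3]{LindellK} noted in the introduction.

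The two bounds in $n\ge \min(\max(3*+4, p+q), 2*+p+q+3)$ reflect two regimes: the first uses Theorem \ref{intromaintheorem} on the Albanese part directly (with $p+q$ ensuring $H^{p,q}$ is a polynomial functor of the appropriate degree), and the second is the Borel-stability bound of the form $n\gtrsim 2s+(p+q)+c$ that becomes the binding constraint for small $s$. The main obstacle is step (B): isolating the Albanese invariants from the non-Albanese complement without full algebraicity of $H^*(\IA_n,\Q)$ requires a careful weight analysis combined with Lindell's framework, and tracking both constants carefully is what produces the minimum appearing in the statement.
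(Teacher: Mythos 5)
Your Hochschild--Serre approach is genuinely different from the paper's and, as written, has a gap that cannot be closed with current knowledge. Both of your steps (A) and (B) require control over the \emph{full} cohomology $H^*(\IA_n,\Q)$, not just the Albanese part: step (A) needs $H^s(\GL(n,\Z),H^t(\IA_n,\Q)\otimes H^{p,q})=0$ for $s>0$, and step (B) needs the non-Albanese quotient to contribute no $\GL(n,\Z)$-invariants after tensoring with $H^{p,q}$. But whether $H^*(\IA_n,\Q)$ is algebraic, or finite-dimensional, or satisfies Borel vanishing in the stable range is an \emph{open question}. The paper's Theorem \ref{HKtheorem} is explicitly \emph{conditional} on exactly this algebraicity hypothesis, and the introduction notes that Lindell only weakened that hypothesis to a Borel-vanishing hypothesis --- still a hypothesis, not a theorem. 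So you cannot ``appeal to'' Borel vanishing for $\{H^*(\IA_n,\Q)\}_n$; it is not available. In the end your edge map naturally lands in $[H^*(\IA_n,\Q)\otimes H^{p,q}]^{\GL(n,\Z)}$, and without (B) you have no handle on identifying this with the Albanese invariants.

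The paper's actual proof sidesteps the non-Albanese part entirely. It first establishes the abstract $\Q[\gpS_p\times\gpS_q]$-module isomorphism $H^i(\Aut(F_n),H^{p,q})\cong[H_A^i(\IA_n,\Q)\otimes H^{p,q}]^{\GL(n,\Z)}$ (Theorem \ref{AutIA}) by identifying both sides with $(\calP\otimes\det)(p,q)_i$: the left via Lindell's stable cohomology computation, the right via Theorem \ref{TheoremAlbaneseIAn} and Lemma \ref{lemmaW*inv}. Then, to show $i^*$ is the isomorphism, it suffices to show $i^*:\calH\to\calA$ is a surjective morphism of wheeled PROPs (since source and target are abstractly isomorphic finite-dimensional spaces in each bidegree). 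Surjectivity follows because $\calH$ is generated as a wheeled PROP by the Kawazumi class $h_1$, the restriction $i^*(h_1)=\tau$ is the Johnson homomorphism class, and $\calA$ is generated as a wheeled PROP by $\tau$ since the Albanese cohomology is generated in degree one. This argument never touches the non-Albanese complement, which is what makes it work unconditionally in the stated range.

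If you want to salvage your approach, you would at minimum need to restrict attention to the Albanese subalgebra from the start and argue that $i^*$ factors through it --- but even then the collapse of the spectral sequence on the full coefficient module $H^t(\IA_n,\Q)\otimes H^{p,q}$ would still require the unproven vanishing, so the wheeled-PROP generation argument really is the more robust route here.
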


\section*{Acknowledgements}
The author would like to thank Erik Lindell for sending a draft version of \cite{LindellK} to her and for helpful comments.
She also thanks Kazuo Habiro for valuable discussions.
She was supported in part by JSPS KAKENHI Grant Number JP22KJ1864 and JP24K16916.

\section{The stable Albanese homology of $\IA_n$}

In this section, we will prove \cite[Conjecture 6.2]{Katada} on the $\GL(n,\Q)$-representation structure of the stable Albanese homology of $\IA_n$.

\subsection{Algebraic $\GL(n,\Q)$-representations}
Here we briefly recall some facts from representation theory of $\GL(n,\Q)$. See Fulton--Harris \cite{FH} for details.

A finite-dimensional $\GL(n,\Q)$-representation $(\rho,V)$ is called \emph{algebraic} if after choosing a basis for $V$, the $(\dim V)^2$ coordinate functions of the group homomorphism $\rho:\GL(n,\Q)\to \GL(V)$ are rational functions on $n^2$ variables.

It is well known that algebraic $\GL(n,\Q)$-representations are completely reducible and that irreducible representations are classified by \emph{bipartitions}, i.e., pairs of partitions.
Here, a partition $\lambda=(\lambda_1,\lambda_2,\cdots,\lambda_l)$ is a non-increasing sequence of non-negative integers.
Let $|\lambda|=\sum_{i=1}^{n} \lambda_i$ denote the \emph{size} of $\lambda$ and $l(\lambda)=\max(\{0\}\cup \{i\mid \lambda_i>0\})$ the \emph{length} of $\lambda$.
For a bipartition $(\lambda,\mu)$, the irreducible algebraic $\GL(n,\Q)$-representation $V_{\lambda,\mu}$ is constructed as follows.
Consider $H=H_1(F_n,\Q)$ as the standard representation of $\GL(n,\Q)$ and set $H^{p,q}=H^{\otimes p}\otimes (H^*)^{\otimes q}$ for $p,q\ge 0$.
The \emph{traceless part} $T_{p,q}$ of $H^{p,q}$ is defined by
\begin{gather*}
    T_{p,q}=\bigcap_{1\le k\le p,\; 1\le l\le q} \ker (c_{k,l}: H^{p,q}\to H^{p-1,q-1}),
\end{gather*}
where $c_{k,l}$ is the \emph{contraction map} that takes the dual pairing $\langle,\rangle: H\otimes H^*\to \Q$ at the $k$-th tensorand of $H^{\otimes p}$ and the $l$-th tensorand of $(H^*)^{\otimes q}$.
For $|\lambda|=p$, $|\mu|=q$, let 
\begin{gather*}
    V_{\lambda,\mu}=T_{p,q}\otimes_{\Q[\gpS_p\times \gpS_q]} (S^{\lambda}\otimes S^{\mu}),
\end{gather*}
where $S^{\lambda}$ and $S^{\mu}$ denote the Specht modules corresponding to $\lambda$ and $\mu$, respectively.
If $n\ge l(\lambda)+l(\mu)$, then $V_{\lambda,\mu}$ is an irreducible algebraic $\GL(n,\Q)$-representation and otherwise $V_{\lambda,\mu}=0$.

\subsection{Invariant theory of $\GL(n,\Z)$}

Let $\{e_i\mid 1\le i\le n\}$ be a basis for $H$ and $\{e_i^*\mid 1\le i\le n\}$ the dual basis for $H^*$.
Define a linear map $\omega:\Q\to H\otimes H^*$ by 
\begin{gather}\label{omega}
    \omega(1)=\sum_{i=1}^{n} e_i\otimes e_i^*,
\end{gather}
which is dual to the pairing $\langle,\rangle$.
By using $(p+q)$ copies of the element $\omega(1)$, we have a surjective linear map
\begin{gather*}
   \Omega: \Q[\gpS_{p+q}]\twoheadrightarrow [H^{p,q}\otimes H^{q,p}]^{\GL(n,\Z)},
\end{gather*}
which is defined by 
\begin{gather*}
 \Omega(\sigma)=\sum_{1\le i_1,\cdots,i_{p+q}\le n} \left(\bigotimes_{j=1}^{p} e_{i_j}\otimes \bigotimes_{j=1}^{q} e_{i_{\sigma^{-1}(j)}}^*\right)\otimes \left(\bigotimes_{j=p+1}^{p+q}e_{i_{j}}\otimes \bigotimes_{j=q+1}^{p+q} e_{i_{\sigma^{-1}(j)}}^*\right)
\end{gather*}
for $\sigma\in \gpS_{p+q}$.
See \cite[Section 2.1]{LindellK} for details.

Let $\pr: [H^{p,q}\otimes H^{q,p}]^{\GL(n,\Z)}\twoheadrightarrow [T_{p,q}\otimes T_{q,p}]^{\GL(n,\Z)}$ denote the projection.
Let
\begin{gather*}
    \Omega'=\pr\circ \Omega|_{\Q[\gpS_p\times \gpS_q]}: \Q[\gpS_p\times \gpS_q]\to[T_{p,q}\otimes T_{q,p}]^{\GL(n,\Z)},
\end{gather*}
where $\Omega|_{\Q[\gpS_p\times \gpS_q]}$ is the restriction of $\Omega$ to $\Q[\gpS_p\times \gpS_q]$.
By the surjectivity of $\Omega$, the map $\Omega'$ is also surjective
since we have $\pr\circ \Omega(\sigma)=0$ for $\sigma\in \gpS_{p+q}\setminus (\gpS_{p}\times \gpS_{q})$.
Let $\Q[\gpS_p\times \gpS_q]$ act on $[T_{p,q}\otimes T_{q,p}]^{\GL(n,\Z)}$ by the place permutations of $p$ copies of $H^*$ and $q$ copies of $H$ in $T_{q,p}$.
Then $\Omega'$ is a $\Q[\gpS_p\times \gpS_q]$-module map.

We will generalize \cite[Proposition 2.10]{LindellK} to obtain the following lemma.

\begin{lemma}\label{Propimprovedinvariant}
For $n\ge \max(p+q,r+s)$, we have a linear isomorphism 
    \begin{gather*}
        [T_{p,q}\otimes T_{r,s}]^{\GL(n,\Z)}\cong 
        \begin{cases}
            \Q[\gpS_p\times \gpS_q] & (p=s,q=r)\\
            0 & (\text{otherwise}).
        \end{cases}
    \end{gather*}
Therefore, the map $\Omega'$ is an isomorphism of $\Q[\gpS_p\times \gpS_q]$-modules for $n\ge p+q$.
\end{lemma}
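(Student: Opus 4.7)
The plan is to reduce the lemma to a dimension count via mixed Schur--Weyl duality, and then combine the count with the already-noted surjectivity of $\Omega'$ to conclude.

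The first preliminary step is to replace $\GL(n,\Z)$-invariants by $\GL(n,\Q)$-invariants throughout. This is justified because $H^{p,q}\otimes H^{r,s}$ is an algebraic $\GL(n,\Q)$-representation and $\GL(n,\Z)$ is Zariski dense in $\GL(n,\Q)$, so the two invariant subspaces agree.

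The second step is to unpack the definition $V_{\lambda,\mu}=T_{p,q}\otimes_{\Q[\gpS_p\times\gpS_q]}(S^\lambda\otimes S^\mu)$ via mixed Schur--Weyl duality (due to Koike and others) to obtain a decomposition
\[
T_{p,q}\;\cong\;\bigoplus_{|\lambda|=p,\;|\mu|=q} V_{\lambda,\mu}\otimes (S^\lambda\otimes S^\mu)
\]
as $\GL(n,\Q)\times(\gpS_p\times\gpS_q)$-representation, valid when $n\ge p+q$, and similarly for $T_{r,s}$ when $n\ge r+s$. Plugging these into $[T_{p,q}\otimes T_{r,s}]^{\GL(n,\Q)}$ and applying Schur's lemma, using the stable duality $V_{\lambda,\mu}^{*}\cong V_{\mu,\lambda}$ (which holds since $l(\lambda)+l(\mu)\le p+q\le n$), only the summands with $(|\lambda'|,|\mu'|)=(|\mu|,|\lambda|)$ survive; hence the invariant space vanishes unless $p=s$ and $q=r$.

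In the surviving case $(r,s)=(q,p)$, the invariant space becomes
\[
\bigoplus_{|\lambda|=p,\;|\mu|=q}(S^\lambda\otimes S^\mu)\otimes(S^\mu\otimes S^\lambda),
\]
whose total dimension is $\sum_{|\lambda|=p}(\dim S^\lambda)^{2}\cdot\sum_{|\mu|=q}(\dim S^\mu)^{2}=p!\,q!=\dim\Q[\gpS_p\times\gpS_q]$. Combined with the surjectivity of $\Omega'$ already established in the excerpt, this dimension match forces $\Omega'$ to be an isomorphism of $\Q[\gpS_p\times\gpS_q]$-modules, proving the lemma.

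The main obstacle I expect is purely bookkeeping: checking that the conventions of the paper match the form of Schur--Weyl duality invoked---in particular, that the multiplicity space attached to $V_{\lambda,\mu}$ is exactly $S^\lambda\otimes S^\mu$ (rather than a dual), and that the identification $V_{\lambda,\mu}^{*}\cong V_{\mu,\lambda}$ is compatible with the dual pairing used in defining $\omega$ and the contractions $c_{k,l}$. Once these identifications are nailed down, the argument becomes essentially formal.
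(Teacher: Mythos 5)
Your proposal is correct and follows essentially the same route as the paper's proof: Koike's irreducible decomposition of $T_{p,q}$, Schur's lemma with the duality $V_{\lambda,\mu}^*\cong V_{\mu,\lambda}$, a dimension count matching $\dim\Q[\gpS_p\times\gpS_q]$, and then the previously noted surjectivity of $\Omega'$. The only cosmetic differences are that you make explicit the passage from $\GL(n,\Z)$-invariants to $\GL(n,\Q)$-invariants via Zariski density and evaluate the dimension sum as $p!\,q!$, both of which the paper leaves implicit.
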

\begin{proof}
We will compute the dimension of $[T_{p,q}\otimes T_{r,s}]^{\GL(n,\Z)}$.
By Koike \cite{Koike}, for $n\ge \max(p+q,r+s)$, we have irreducible decompositions
    \begin{gather*}
        T_{p,q}\cong \bigoplus_{\lambda\vdash p, \mu\vdash q} (V_{\lambda,\mu})^{\oplus (\dim S^{\lambda}\dim S^{\mu})},\quad T_{r,s}\cong \bigoplus_{\xi\vdash r, \eta\vdash s} (V_{\xi,\eta})^{\oplus (\dim S^{\xi}\dim S^{\eta})}.
    \end{gather*}
    Therefore, we have
    \begin{gather*}
        T_{p,q}\otimes T_{r,s}\cong \bigoplus_{\substack{\lambda\vdash p,\mu\vdash q\\ \xi\vdash r,\eta\vdash s}} 
        (V_{\lambda,\mu}\otimes V_{\xi,\eta})^{\oplus (\dim S^{\lambda}\dim S^{\mu}\dim S^{\xi}\dim S^{\eta})}.
    \end{gather*}
    Since for each $\lambda\vdash p$, $\mu\vdash q$, $\xi\vdash r$, $\eta\vdash s$,
    \begin{gather*}
        [V_{\lambda,\mu}\otimes V_{\xi,\eta}]^{\GL(n,\Z)}\cong \Hom_{\GL(n,\Z)}(V_{\eta,\xi},V_{\lambda,\mu})\cong \Q^{\oplus (\delta_{\lambda,\eta}\delta_{\mu,\xi})}
    \end{gather*}
    by Schur's lemma,
    we have
    \begin{gather*}
        \dim([T_{p,q}\otimes T_{r,s}]^{\GL(n,\Z)})=\delta_{p,s}\delta_{q,r}\sum_{\lambda\vdash p,\mu\vdash q} (\dim S^{\lambda})^2 (\dim S^{\mu})^2.
    \end{gather*}
    
    It follows from 
     $$\Q[\gpS_p\times \gpS_q]\cong  \Q[\gpS_p]\otimes \Q[\gpS_q]\cong\bigoplus_{\lambda\vdash p,\mu\vdash q}(S^{\lambda})^{\oplus \dim S^\lambda}\otimes (S^{\mu})^{\oplus \dim S^\mu}$$
     that we have
    \begin{gather*}
        \dim(\Q[\gpS_p\times \gpS_q])=\sum_{\lambda\vdash p,\mu\vdash q} (\dim S^{\lambda})^2 (\dim S^{\mu})^2,
    \end{gather*}
which completes the proof.
\end{proof}

\subsection{The conjectural structure of the stable Albanese homology of $\IA_n$}
We will briefly review our conjectural structure of the stable Albanese homology of $\IA_n$.
For $i\ge 1$, let 
$$U_i=\Hom(H,{\bigwedge}^{i+1}H).$$
Let $U_*=\bigoplus_{i\ge 1}U_i$ be the graded $\GL(n,\Q)$-representation.
Define $W_*=\widetilde{S}^*(U_*)$ as the \emph{traceless part} of the graded-symmetric algebra $S^*(U_*)$ of $U_*$.
Here, the \emph{traceless tensor product} $V_{\lambda,\mu}\widetilde{\otimes}\: V_{\xi,\eta}$ of two irreducible algebraic $\GL(n,\Q)$-representations $V_{\lambda,\mu}$ and $V_{\xi,\eta}$ is defined by
\begin{gather*}
    V_{\lambda,\mu}\widetilde{\otimes}\: V_{\xi,\eta}=(V_{\lambda,\mu}\otimes V_{\xi,\eta})\cap T_{|\lambda|+|\xi|,|\mu|+|\eta|}\subset H^{|\lambda|+|\xi|,|\mu|+|\eta|},
\end{gather*}
and the traceless part of the tensor algebra is defined by using the traceless tensor product instead of the usual tensor product.
The traceless part of the graded-symmetric algebra is defined as the image of the traceless part of the tensor algebra under the canonical projection.
See \cite[Sections 2.5 and 2.6]{Katada} for details of the notion of the traceless part.

In order to prove our main theorem, we will review and give the stable range of \cite[Proposition 12.3 and Lemma 12.4]{Katada}. To state these proposition and lemma, we will review the wheeled PROP $\calC_{\calP_0^{\circlearrowright}}$ that is introduced in \cite{Kawazumi--Vespa}, and the non-unital wheeled PROP $\calC_{\calO^{\circlearrowright}}$ that is introduced in \cite{Katada}, corresponding to the operad $\Com$ of non-unital commutative algebras. 

Let $\calP_0=\bigoplus_{k\ge 1}\calP_0(k)$ denote the operadic suspension of the operad $\Com$, i.e., we have $\calP_0(0)=0$ and $\calP_0(k)$ is the sign representation of $\gpS_k$ placed in cohomological dimension $k-1$ for $k\ge 1$.
Let $\calP_0^{\circlearrowright}$ denote the wheeled completion of $\calP_0$ and $\calC_{\calP_0^{\circlearrowright}}$ the wheeled PROP freely generated by $\calP_0^{\circlearrowright}$.

Let $\calO=\bigoplus_{k\ge 2}\calP_0(k)$ denote the non-unital suboperad of $\calP_0$. Let $\calO^{\circlearrowright}$ denote the non-unital wheeled sub-operad of $\calP_0^{\circlearrowright}$ and $\calC_{\calO^{\circlearrowright}}$ the non-unital wheeled sub-PROP of $\calC_{\calP_0^{\circlearrowright}}$.

\begin{remark}
In \cite{LindellK}, Lindell defined a $\wBr_n$-module $\calP\otimes \det$ (resp. a $\dwBr$-module $\calP'\otimes \det$), which is a functor from the walled Brauer category $\wBr_n$ (resp. the downward walled Brauer category $\dwBr$) to the category of $\Q$-vector spaces,
in terms of labelled partitions of sets.
A wheeled PROP (resp. a non-unital wheeled PROP) admits a natural structure of a $\wBr_n$-module (resp. a $\dwBr$-module), and 
the wheeled PROP $\calC_{\calP_0^{\circlearrowright}}$ corresponds to $\calP\otimes \det$ and the non-unital wheeled PROP $\calC_{\calO^{\circlearrowright}}$ corresponds to $\calP'\otimes \det$.
\end{remark}

\begin{lemma}[Cf.\;{\cite[Proposition 12.3]{Katada}}]\label{lemmaW*}
    For $n\ge 3i$, we have an isomorphism of $\GL(n,\Q)$-representations
    \begin{gather*}
        W_i\cong \bigoplus_{a-b=i}T_{a,b}\otimes_{\Q[\gpS_a\times \gpS_b]} \calC_{\calO^{\circlearrowright}}(a,b).
    \end{gather*}
    Here, we have $T_{a,b}\otimes_{\Q[\gpS_a\times \gpS_b]} \calC_{\calO^{\circlearrowright}}(a,b)=0$ unless $i\le a\le 2i$, $0\le b\le i$.
\end{lemma}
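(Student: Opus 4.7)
The plan is to extract the precise stable range from the argument of \cite[Proposition 12.3]{Katada}, whose construction already produces the claimed isomorphism in an unspecified stable range. I would start by unfolding the definition of $W_i$: since $W_*=\widetilde{S}^*(U_*)$ with $U_j=\Hom(H,\bigwedge^{j+1}H)\subset H^{j+1,1}$ placed in degree $j$, any length-$k$ traceless graded-symmetric monomial of total degree $i$ is assembled from factors of degrees $(j_1,\ldots,j_k)$ with $j_s\ge 1$ and $\sum j_s=i$, and lives in $T_{a,b}$ with $a=\sum(j_s+1)=i+k$ and $b=k$. This accounts for the bigrading $a-b=i$ on the right-hand side of the claimed isomorphism, while the inequality $1\le k\le i$ (at most $i$ factors, each contributing at least one to the degree) immediately yields the vanishing range $i\le a\le 2i$ and $0\le b\le i$.

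Next, I would identify the combinatorics governing $\widetilde{S}^*(U_*)$ with the free non-unital wheeled PROP $\calC_{\calO^{\circlearrowright}}$. The operadic suspension $\calO(j+1)=\calP_0(j+1)$ is exactly the sign representation placed in the correct degree to generate the wedge-power input structure of $U_j$, so one may regard each factor $U_j$ as the evaluation of the $\calO$-generator $\calP_0(j+1)$ on $H$. The graded symmetrization over the set of $k$ factors and the traceless relations correspond, respectively, to the symmetric-group action on the vertex set of a graph and to the wheeled-PROP composition (which implements all permissible contractions between legs of different corollas). This gives a natural map from $T_{a,b}\otimes_{\Q[\gpS_a\times \gpS_b]}\calC_{\calO^{\circlearrowright}}(a,b)$ into the $(a,b)$-part of $W_i$, whose construction and bijectivity are carried out in \cite[Proposition 12.3]{Katada}.

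The stable range $n\ge 3i$ then emerges from Lemma \ref{Propimprovedinvariant}: that lemma identifies, for $n\ge p+q$, the space $[T_{p,q}\otimes T_{q,p}]^{\GL(n,\Z)}$ with $\Q[\gpS_p\times \gpS_q]$, which is exactly the faithfulness needed to ensure that the natural map above is injective in each fixed bidegree. Over the bidegrees $(a,b)$ contributing to $W_i$, the quantity $a+b$ is maximized at $(a,b)=(2i,i)$ with value $3i$, so taking $n\ge 3i$ suffices uniformly across all bidegrees, which is the bound claimed.

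The main obstacle is to argue that no additional stability constraint is introduced by the graded-symmetrization step, i.e., by passing from an ordered tensor of $U_j$'s to a graded-symmetric product. On the wheeled-PROP side this step merely forgets a labeling of the internal vertices, which is already intrinsic to $\calC_{\calO^{\circlearrowright}}(a,b)$; combined with the surjectivity of $\Omega$ and Lemma \ref{Propimprovedinvariant}, this confirms that the single bound $n\ge 3i$ coming from the maximal bidegree $(2i,i)$ is sufficient for the identification throughout.
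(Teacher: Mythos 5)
The bigrading analysis in your first paragraph is correct, and you rightly defer the construction and bijectivity of the identification with $\calC_{\calO^{\circlearrowright}}$ to \cite[Proposition 12.3]{Katada}. However, your attribution of the stable range $n\ge 3i$ to Lemma \ref{Propimprovedinvariant} is not the mechanism the paper uses, and I think it does not actually close the argument.

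Lemma \ref{Propimprovedinvariant} computes $[T_{p,q}\otimes T_{r,s}]^{\GL(n,\Z)}$; this controls the size of $\GL(n,\Z)$-invariant spaces, not the internal decomposition of $W_i$ as a representation. The statement of Lemma \ref{lemmaW*} is a representation-theoretic isomorphism, and the place where small $n$ can cause failure is in the plethystic step: $W_i$ is built from graded-symmetric powers of the $U_j\cong V_{1^j,1}$, and the identification with a Schur--Weyl-type expression $T_{a,b}\otimes_{\Q[\gpS_a\times\gpS_b]}\calC_{\calO^{\circlearrowright}}(a,b)$ rests on identities such as
\begin{gather*}
H^{k(l+1),k}\otimes_{\Q[\gpS_{l+1}\wr\gpS_k]}\calO(l)^{\otimes k}\cong S^{k}(V_{1^l,1}),\qquad
H^{kl,0}\otimes_{\Q[\gpS_{l}\wr\gpS_k]}\calO_w^{\circlearrowright}(l)^{\otimes k}\cong S^{k}(V_{1^l,0}),
\end{gather*}
which hold whenever $kl\le i$ and $n\ge 3i$ (indeed $a+b=k(l+2)\le 3kl\le 3i$). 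That is the precise input the paper adds to the earlier proof to obtain the stated range; it is about the $\Q[\gpS_{l+1}\wr\gpS_k]$-module structure of tensor powers, not about invariants. Your proposal does not address this step, and saying ``no additional stability constraint is introduced by the graded-symmetrization step'' is exactly the point that needs a proof and is where the $3i$ bound actually comes from. If you want to keep the invariant-theoretic route, you would first need to show that checking $\GL(n,\Z)$-invariants of pairings with all $T_{r,s}$ detects isomorphisms of algebraic $\GL(n,\Q)$-representations in the relevant range, and you would still need to control the target $W_i$, not just inject into it; the paper's plethystic observation is shorter and is the intended argument.
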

\begin{proof}
    The proof of \cite[Proposition 12.3]{Katada} does not take care of the stable range, but the argument holds for $n\ge 3i$ 
    since for any $k,l\ge 1$ such that $kl\le i$, we have
    \begin{gather*}
        H^{k(l+1),k}\otimes_{\Q[\gpS_{l+1} \wr \gpS_{k}]}\calO(l)^{\otimes k}\cong S^{k}(V_{1^l,1}),\quad 
        H^{kl, 0} \otimes_{\Q[\gpS_{l}\wr \gpS_{k}]} \calO_w^{\circlearrowright}(l)^{\otimes k}\cong S^{k}(V_{1^l,0}),
    \end{gather*}
     where $S^k(V_{1^l,i})$ denotes the graded-symmetric power of $V_{1^l,i}$ ($i=0,1$), and where $\calO_w^{\circlearrowright}$ denotes the wheel part of the non-unital wheeled operad $\calO^{\circlearrowright}$.
\end{proof}

\begin{lemma}[Cf.\;{\cite[Lemma 12.4]{Katada}}]\label{lemmaW*inv}
    For $n\ge \max(3i,p+q)$, we have an isomorphism of $\Q[\gpS_p\times \gpS_q]$-modules
    \begin{gather*}
        [(W_i)^*\otimes H^{p,q}]^{\GL(n,\Z)}\cong \calC_{\calP_0^{\circlearrowright}}(p,q)_i.
    \end{gather*}
\end{lemma}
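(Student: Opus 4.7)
The plan is to combine Lemma \ref{lemmaW*} (decomposing $W_i$ via traceless modules and the wheeled PROP $\calC_{\calO^{\circlearrowright}}$) with Lemma \ref{Propimprovedinvariant} (identifying invariants of tensor products of traceless modules) through the tensor-Hom adjunction, and then recognise the result as the identity-strand decomposition of $\calC_{\calP_0^{\circlearrowright}}(p,q)_i$.

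By Lemma \ref{lemmaW*}, for $n \ge 3i$ we have $W_i \cong \bigoplus_{a-b=i} T_{a,b} \otimes_{\Q[\gpS_a \times \gpS_b]} \calC_{\calO^{\circlearrowright}}(a,b)$, with only summands having $a+b \le 3i$ contributing. Dualising (using $T_{a,b}^* \cong T_{b,a}$) and applying the tensor-Hom adjunction,
\[
[W_i^* \otimes H^{p,q}]^{\GL(n,\Z)} \cong \bigoplus_{a-b=i} \Hom_{\Q[\gpS_a \times \gpS_b]}\bigl(\calC_{\calO^{\circlearrowright}}(a,b),\, [T_{b,a} \otimes H^{p,q}]^{\GL(n,\Z)}\bigr).
\]
To evaluate the inner invariants I would decompose $H^{p,q}$ into its traceless components via the $\omega$-insertion maps $T_{p-c, q-c} \hookrightarrow H^{p,q}$ parametrised by triples $(A, B, \sigma)$ with $A \subset \{1,\ldots,p\}$, $B \subset \{1,\ldots,q\}$ of size $c$ and a bijection $\sigma \colon A \to B$. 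For each such summand Lemma \ref{Propimprovedinvariant} applies --- its hypotheses $a+b \le 3i$ and $(p-c)+(q-c) \le p+q$ are both bounded by $n$ in the range $n \ge \max(3i, p+q)$ --- and gives $[T_{b,a} \otimes T_{p-c,q-c}]^{\GL(n,\Z)} \cong \Q[\gpS_a \times \gpS_b]$ exactly when $a = p-c$ and $b = q-c$; hence only $c = p-a = q-b$ contributes, forcing $i = a-b = p-q$.

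Plugging back and using self-duality of $\gpS_n$-modules together with $\Hom_{\Q[\gpS]}(V,\, \Q[\gpS] \otimes W) \cong V^* \otimes W$ yields
\[
[W_i^* \otimes H^{p,q}]^{\GL(n,\Z)} \cong \bigoplus_{c=0}^{\min(p,q)} \calC_{\calO^{\circlearrowright}}(p-c, q-c) \otimes \Q[N_c^{p,q}],
\]
where $N_c^{p,q}$ denotes the set of triples $(A, B, \sigma)$ above. To match this with $\calC_{\calP_0^{\circlearrowright}}(p,q)_i$, I would use that $\calP_0$ differs from $\calO$ only by the arity-$1$ identity operation (of degree $0$): an element of $\calC_{\calP_0^{\circlearrowright}}(p,q)$ splits uniquely into an $\calO$-core on $p-c$ external inputs and $q-c$ external outputs together with $c$ identity strands recorded by a triple in $N_c^{p,q}$, and an edge-count shows that the whole space lies in degree $p-q$. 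The main technical obstacle will be making this identification $\gpS_p \times \gpS_q$-equivariant --- tracing the symmetric-group actions carefully through the tensor-Hom adjunction so that the action on the triples $(A, B, \sigma)$ matches the natural action on identity strands on the PROP side --- after which everything reduces to direct application of the two cited lemmas.
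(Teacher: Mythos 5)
Your proposal is correct and follows essentially the same path as the paper's proof: decompose $(W_i)^*$ via Lemma \ref{lemmaW*}, decompose $H^{p,q}$ into traceless pieces via $\omega$-insertions, collapse the $\GL(n,\Z)$-invariants using Lemma \ref{Propimprovedinvariant}, and reassemble the surviving summands into $\calC_{\calP_0^{\circlearrowright}}(p,q)_i$. The only cosmetic differences are that the paper keeps everything in $\otimes_{\Q[\gpS_a\times\gpS_b]}$ form rather than passing to $\Hom$ (equivalent here since $\Q[\gpS_a\times\gpS_b]$ is semisimple and its modules self-dual), and it leaves the final identification $\bigoplus_c\calC_{\calO^{\circlearrowright}}(p-c,q-c)_i^{\oplus\binom{p}{c}\binom{q}{c}c!}\cong\calC_{\calP_0^{\circlearrowright}}(p,q)_i$ implicit rather than spelling out the identity-strand bookkeeping as you do.
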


\begin{proof}
Since the proof of \cite[Lemma 12.4]{Katada} does not deal with the stable range, we will recall the proof of it to take care of the stable range.
By Lemma \ref{lemmaW*}, we have for $n\ge 3i$,
\begin{gather*}
        (W_i)^*\cong \bigoplus_{a-b=i}T_{b,a}\otimes_{\Q[\gpS_a\times \gpS_b]} \calC_{\calO^{\circlearrowright}}(a,b),
    \end{gather*}
where the direct summand is trivial unless $i\le a\le 2i, 0\le b\le i$.
For $n\ge p+q$, we have
$H^{p,q}\cong \bigoplus_{c=0}^{\min(p,q)}T_{p-c,q-c}^{\oplus \binom{p}{c}\binom{q}{c}c!}$.
Therefore, we have
\begin{gather*}
\begin{split}
&[(W_i)^*\otimes H^{p,q}]^{\GL(n,\Z)}\\
&\cong \left[\bigoplus_{a-b=i}\left(T_{b,a}\otimes_{\Q[\gpS_a\times \gpS_b]} \calC_{\calO^{\circlearrowright}}(a,b)\right)\otimes \bigoplus_{c=0}^{\min(p,q)}\left(T_{p-c,q-c}^{\oplus \binom{p}{c}\binom{q}{c}c!}\right)\right]^{\GL(n,\Z)}\\
&\cong \bigoplus_{a-b=i}\bigoplus_{c=0}^{\min(p,q)}[T_{p-c,q-c}\otimes T_{b,a}]^{\GL(n,\Z)}\otimes_{\Q[\gpS_a\times \gpS_b]} \calC_{\calO^{\circlearrowright}}(a,b)^{\oplus \binom{p}{c}\binom{q}{c}c!}\\
&\cong \bigoplus_{c=0}^{\min(p,q)}\Q[\gpS_{p-c}\times \gpS_{q-c}]\otimes_{\Q[\gpS_{p-c}\times \gpS_{q-c}]} \calC_{\calO^{\circlearrowright}}(p-c,q-c)_i^{\oplus \binom{p}{c}\binom{q}{c}c!}
\\
&\cong \bigoplus_{c=0}^{\min(p,q)}\calC_{\calO^{\circlearrowright}}(p-c,q-c)_i^{\oplus \binom{p}{c}\binom{q}{c}c!}\\
&\cong \calC_{\calP_0^{\circlearrowright}}(p,q)_i
\end{split}
\end{gather*}
by Lemma \ref{Propimprovedinvariant}.
\end{proof}

\subsection{The stable Albanese homology of $\IA_n$}

Now we will state the main theorem of this paper.

\begin{theorem}[{\cite[Conjecture 6.2]{Katada}}]\label{TheoremAlbaneseIAn}
    For $n\ge 3i$, we have an isomorphism of $\GL(n,\Q)$-representations
    \begin{gather*}
        F_i: H_i^A(\IA_n,\Q)\to W_i.
    \end{gather*}
\end{theorem}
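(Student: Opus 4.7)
The plan is to prove the theorem by comparing $\GL(n,\Z)$-invariants on both sides, reducing the statement to the invariant-theoretic content of Lemma \ref{Propimprovedinvariant}. Both $H_i^A(\IA_n,\Q)$ and $W_i$ are algebraic $\GL(n,\Q)$-representations, hence completely reducible, and are determined up to isomorphism by the family of $\Q[\gpS_p\times \gpS_q]$-modules $[(-)^*\otimes H^{p,q}]^{\GL(n,\Z)}$ for $n\ge p+q$. In \cite[Section 12]{Katada} a natural $\GL(n,\Q)$-equivariant surjection $F_i: H_i^A(\IA_n,\Q)\twoheadrightarrow W_i$ was already constructed, exhibiting $W_i$ as a quotient of $H_i^A(\IA_n,\Q)$. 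It therefore suffices to show that $F_i$ is injective, equivalently that the map induced by $F_i^*$ on these invariants is an isomorphism for every $p,q$ in the stable range.

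The key additional input is a reinterpretation of \cite[Proposition 6.3]{LindellK}: for $n\ge 3i$ and $n\ge p+q$, one produces a $\Q[\gpS_p\times \gpS_q]$-equivariant surjection
\begin{gather*}
\Phi^{p,q}: \calC_{\calP_0^{\circlearrowright}}(p,q)_i\twoheadrightarrow [H_i^A(\IA_n,\Q)^*\otimes H^{p,q}]^{\GL(n,\Z)},
\end{gather*}
constructed naturally via the walled Brauer / wheeled PROP dictionary recorded in the remark after Lemma \ref{Propimprovedinvariant}. Naturality of Lindell's construction ensures that $\Phi^{p,q}$ factors through the map $[W_i^*\otimes H^{p,q}]^{\GL(n,\Z)}\to [H_i^A(\IA_n,\Q)^*\otimes H^{p,q}]^{\GL(n,\Z)}$ induced by $F_i^*$, via the identification $\calC_{\calP_0^{\circlearrowright}}(p,q)_i\cong [W_i^*\otimes H^{p,q}]^{\GL(n,\Z)}$ of Lemma \ref{lemmaW*inv}. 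Since $F_i$ is surjective, the induced map on invariants is injective, and the surjectivity of $\Phi^{p,q}$ combined with this factorization forces that injective map to be surjective as well. Letting $p,q$ vary within $p+q\le 3i$ captures all irreducibles $V_{\lambda,\mu}$ that could appear in either side (by Lemma \ref{lemmaW*} we have $|\lambda|\le 2i$, $|\mu|\le i$), so the algebraicity of both representations promotes this invariant-level isomorphism to an isomorphism $F_i$ of $\GL(n,\Q)$-representations.

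The main obstacle is securing the sharp stable range $n\ge 3i$ rather than the looser $n\gg i$ obtained in \cite{LindellK}. Two ingredients make this possible: the sharp form of Lemma \ref{Propimprovedinvariant}, valid already as soon as $n\ge p+q$, and the explicit truncated description of $W_i$ in Lemma \ref{lemmaW*} showing that the relevant partitions satisfy $|\lambda|\le 2i$ and $|\mu|\le i$. The technical heart of the argument is therefore to trace Lindell's walled Brauer argument carefully enough to produce the surjection $\Phi^{p,q}$, and to verify its compatibility with $F_i$, in the full range $n\ge 3i$ rather than only in Lindell's weaker range.
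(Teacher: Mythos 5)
Your overall strategy mirrors the paper's proof: use Lindell's construction to produce a surjection from the graph space $(\calP\otimes\det)(p,q)_i\cong\calC_{\calP_0^{\circlearrowright}}(p,q)_i$ onto the $\GL(n,\Z)$-invariants of the twisted Albanese cohomology, identify the source with $[W_i^*\otimes H^{p,q}]^{\GL(n,\Z)}$ via Lemma~\ref{lemmaW*inv}, and play it against the injection on invariants coming from the surjection $H_i^A(\IA_n,\Q)\twoheadrightarrow W_i$ of Theorem~\ref{TheoremAlbaneseinclusion}. This matches the paper.

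Two remarks on where you deviate, one cosmetic and one a genuine gap. First, the factorization claim --- that $\Phi^{p,q}$ factors through the map induced by $F_i^*$ --- is not needed and is not something you should take for granted without checking how Lindell's $\alpha$ interacts with the map $F_*$; the paper avoids it entirely with a dimension count: surjection onto one finite-dimensional space and injection into it, with matching source dimension, already force both to be isomorphisms, with no compatibility required. Second, and more seriously, your final step is flawed as stated. You claim that varying $(p,q)$ over $p+q\le 3i$ detects all irreducibles of \emph{both} sides, citing Lemma~\ref{lemmaW*}; but that lemma only constrains the shapes $(\lambda,\mu)$ appearing in $W_i$, not in $H_i^A(\IA_n,\Q)$. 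A priori $H_i^A(\IA_n,\Q)$ could have extra summands $V_{\lambda,\mu}$ with $|\lambda|>2i$ or $|\mu|>i$, invisible to these invariants, and nothing you have written rules that out. The paper closes this gap with the observation that
$H_i^A(\IA_n,\Q)\subset H_i(\IA_n^{\ab},\Q)\cong{\textstyle\bigwedge}^i U_1\subset H^{2i,i}$,
so the Albanese homology is itself a subrepresentation of $H^{2i,i}$ and hence controlled by a single pair $(p,q)=(2i,i)$; this containment is exactly the missing ingredient in your argument, and it also shows you do not actually need to vary $(p,q)$ at all.
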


\begin{remark}
The range $n\ge 3i$ in the statement of Theorem \ref{TheoremAlbaneseIAn} might be improved. However, 
the multiplicity of an irreducible algebraic $\GL(n,\Q)$-representation which appears as a component of $W_i$ is stable for $n\ge 3i$
(see \cite[Section 6.1]{Katada} for details).
In the sense of Church--Farb \cite{CFrep}, the Albanese homology of $\IA_n$ is representation stable in $n\ge 3i$.
\end{remark}

Since the first Albanese homology of $\IA_n$ is isomorphic to the first rational homology of $\IA_n$, the degree $1$ case follows from Cohen--Pakianathan, Farb and Kawazumi \cite{Kawazumi}.
The degree $2$ case is proven by Pettet \cite{Pettet} and the degree $3$ case is by the author \cite{Katada}.
Moreover, in \cite[Theorem 6.1]{Katada}, the author proved the following.

\begin{theorem}[{\cite[Theorem 6.1]{Katada}}]\label{TheoremAlbaneseinclusion}
    We have a morphism of graded $\GL(n,\Q)$-representations
    \begin{gather*}
        F_*: H_*(U_1,\Q)\to S^*(U_*)
    \end{gather*}
    such that $F_*(H^A_*(\IA_n,\Q))\supset W_*$ for $n\ge 3*$.
\end{theorem}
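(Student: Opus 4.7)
The plan is to construct $F_*$ using a natural $\GL(n,\Q)$-equivariant graded Lie algebra structure on $U_*$, and then to exhibit explicit Albanese homology cycles coming from commuting Magnus-type elements of $\IA_n$ whose $F_*$-images exhaust $W_*$ in the stable range. First I would equip $U_* = \bigoplus_{i\ge 1}\Hom(H, {\bigwedge}^{i+1}H)$ with a $\GL(n,\Q)$-equivariant graded bracket $U_i \otimes U_j \to U_{i+j}$, obtained by extending an element of $U_i$ to a derivation of ${\bigwedge}^* H$ of exterior-degree $i$, taking the graded commutator of two such derivations, and restricting back to $H$. Then I would define $F_k: {\bigwedge}^k(U_1) \to S^*(U_*)$ by a Poincar\'e--Birkhoff--Witt-type formula: on $f_1 \wedge \cdots \wedge f_k$, sum with signs over set partitions $\pi$ of $\{1,\ldots,k\}$ of graded-symmetric products $\prod_{B \in \pi} [f_B]$, where $[f_B] \in U_{|B|}$ is the antisymmetrized iterated bracket along the block $B$. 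Well-definedness on the exterior algebra and the graded $\GL(n,\Q)$-equivariance follow from the corresponding properties of the bracket, and $F_*$ gives the required morphism under the identification $H_*(U_1, \Q) \cong {\bigwedge}^*(U_1)$.

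To see $F_*(H^A_*(\IA_n, \Q)) \supset W_*$ for $n \ge 3*$, I would produce, for each generator of $W_*$, a cycle in $H_*(\IA_n, \Q)$ mapping to it. By Lemma \ref{lemmaW*}, $W_i \cong \bigoplus_{a-b=i} T_{a,b} \otimes_{\Q[\gpS_a \times \gpS_b]} \calC_{\calO^{\circlearrowright}}(a,b)$, and generators of the right-hand side are indexed by the non-unital wheeled PROP's combinatorial data. For each such combinatorial datum --- essentially a labelled partition or wheeled bipartite graph --- I would select a collection of commuting Magnus-type generators of $\IA_n$ so that their tensor-level contractions reproduce the given wheeled-PROP element, with graph vertices corresponding to individual Magnus generators and edges to contractions of $H$-factors. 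Because the selected Magnus generators commute in $\IA_n$, the K\"unneth formula lifts their product to a class in $H_*(\IA_n, \Q)$ whose abelianized image lies in ${\bigwedge}^*(U_1) = H_*(U_1, \Q)$, hence is Albanese. Applying $F_*$ converts the commuting-product structure into a symmetric-product-plus-bracket-correction combination, whose leading term (from the discrete partition) matches the target generator.

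The main obstacle is to verify carefully that these explicit Magnus cycles hit the claimed generators of $W_*$, rather than landing in some larger subspace of $S^*(U_*)$. Two subtleties arise: (i) the PBW-type corrections from non-discrete partitions may perturb the leading contribution, so one must argue inductively --- ordering partitions by refinement --- that after recombination the desired generator is still captured modulo strictly earlier terms; and (ii) the traceless condition in $W_* = \widetilde{S}^*(U_*)$ must be respected, which should follow from the fact that tensor contractions among the chosen Magnus generators correspond precisely to the trace-type contractions killed in $\widetilde{S}^*(U_*)$, matching the wheel-type and bracket-type contractions allowed in $\calC_{\calO^{\circlearrowright}}$. The stability bound $n \ge 3*$ appears because elements of $W_i$ have total $H$-degree at most $3i$, which is exactly the range where Lemma \ref{lemmaW*} and the invariant theory of Lemma \ref{Propimprovedinvariant} apply stably, ensuring that no accidental relations collapse the image in this range.
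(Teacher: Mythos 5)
This theorem is not actually proved in the present paper: it is imported verbatim from \cite[Theorem 6.1]{Katada}, and the only information given here about its proof is the remark that the original argument detects $W_*$ using abelian cycles in $H_*(\IA_n,\Q)$. Your overall strategy --- build $F_*$ from a bracket on $U_*$ combined with symmetric products, then hit $W_*$ with abelian cycles coming from commuting families of Magnus-type generators --- is therefore the strategy of the cited reference in outline, not a new route. The question is whether your sketch of that route would actually go through.

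Two steps would fail as written. First, the graded commutator of two derivations of ${\bigwedge}^*H$ of exterior degree $1$ is their \emph{anti}commutator $DD'+D'D$, which is \emph{symmetric} in the two inputs; it factors through $S^2U_1$ and does not descend to ${\bigwedge}^2U_1\subset H_2(U_1,\Q)$, and ``antisymmetrizing'' it gives zero. Taken literally, your $[f_B]$ vanishes for $|B|\ge 2$ and $F_*$ collapses into $S^*(U_1)$, which cannot contain $W_*$ (already $U_i\subset W_i$ is missed). One must instead use the ungraded difference $\tilde f\circ g-\tilde g\circ f$ restricted to $H$ (no longer a derivation, but it still lands in $U_2=\Hom(H,{\bigwedge}^3H)$), and for blocks of size $\ge 3$ one has to check that the chosen antisymmetrization of iterated brackets is not killed by the Jacobi identity. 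Second, the detection step is the entire content of the theorem and is only gestured at. For a generator of $W_i$ lying in $S^k(U_*)$ with $k<i$, the discrete-partition term of $F_i$ lands in the summand $S^i(U_1)$, so it cannot serve as the ``leading term'' you match against; the refinement induction must be organized around the coarsest contributing partition. Moreover one must exhibit, for every irreducible constituent $V_{\lambda,\mu}$ of $W_i$ --- including those with $\mu\neq 0$, which require trace-type contractions --- an explicit commuting family in $\IA_n$ whose abelian cycle maps onto a highest-weight vector, and verify that such families exist already for $n=3i$ (this is where the bound $n\ge 3*$ really comes from, not from Lemma \ref{lemmaW*} alone). That computation is the substance of \cite[Theorem 6.1]{Katada} and cannot be dismissed as bookkeeping.
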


\begin{remark}
In order to detect $W_*$ in the image of $F_*$, the author used abelian cycles in $H_*(\IA_n,\Q)$. Therefore, it follows from Theorem \ref{TheoremAlbaneseIAn} that the stable Albanese homology of $\IA_n$ is generated by abelian cycles.
\end{remark}

In the appendix of \cite{LindellK}, the author proved the statement of Theorem \ref{TheoremAlbaneseIAn} for sufficiently large $n$ with respect to the homological degree.

\begin{proof}[Proof of Theorem \ref{TheoremAlbaneseIAn}]
Let $\calG(p,q)$ denote the graded quotient vector space of $(2,1)$-valent marked directed oriented graphs with $p$ incoming legs and $q$ outgoing legs modulo the directed IH-relation, which is introduced in \cite[Section 6]{LindellK}.
In the proof of \cite[Proposition 6.3]{LindellK}, Lindell proved that $\calG(p,q)$ is isomorphic to $(\calP\otimes\det)(p,q)$, and constructed a surjective map
\begin{gather*}
    \alpha: \calG(p,q)\twoheadrightarrow
    [R_{\text{pres}}\otimes H^{p,q}]^{\GL(n,\Z)}
\end{gather*}
by using the element $\omega(1)$ in \eqref{omega}, where $R_{\text{pres}}$ is the graded-commutative ring that is constructed in \cite[Definition 6.1]{LindellK}.
By \cite[Remark 6.2]{LindellK}, we have an isomorphism of $\GL(n,\Q)$-representations
\begin{gather*}
    R_{\text{pres}}\cong H^*(U_1,\Q)/{\langle R_2 \rangle},
\end{gather*}
where $\langle R_2 \rangle$ denotes the two-sided ideal of $H^*(U_1,\Q)$ generated by 
$$R_2=\ker\left(H^2(U_1,\Q)\cong {\bigwedge}^2 H^1(\IA_n,\Q) \xrightarrow{\cup} H^2(\IA_n,\Q)\right).$$
Since the Albanese cohomology of $\IA_n$ is equal to the image of the cup product map $\bigwedge^* H^1(\IA_n,\Q) \xrightarrow{\cup} H^*(\IA_n,\Q)$,
we have a surjective morphism of graded $\GL(n,\Q)$-representations
\begin{gather}\label{surjhom}
    H^*(U_1,\Q)/{\langle R_2 \rangle}\twoheadrightarrow H^*_A(\IA_n,\Q).
\end{gather}
Therefore, we have a surjective map
\begin{gather*}
    (\calP\otimes\det)(p,q)\twoheadrightarrow  [H^*_A(\IA_n,\Q)\otimes H^{p,q}]^{\GL(n,\Z)}.
\end{gather*}

For $n\ge 3i$, by Lemma \ref{lemmaW*inv}, we have
\begin{gather*}
    (\calP\otimes\det)(2i,i)_i\cong \calC_{\calP_0^{\circlearrowright}}(2i,i)_i\cong [(W_i)^{*}\otimes H^{2i,i}]^{\GL(n,\Z)}
\end{gather*}
and thus we have a surjective map
\begin{gather*}
     [(W_i)^{*}\otimes H^{2i,i}]^{\GL(n,\Z)}\twoheadrightarrow [H^i_A(\IA_n,\Q)\otimes H^{2i,i}]^{\GL(n,\Z)}.
\end{gather*}
By combining this with Theorem \ref{TheoremAlbaneseinclusion}, for $n\ge 3i$ we have 
\begin{gather*}
    [(W_i)^{*}\otimes H^{2i,i}]^{\GL(n,\Z)}\cong [H^i_A(\IA_n,\Q)\otimes H^{2i,i}]^{\GL(n,\Z)}.
\end{gather*}
Therefore, since
$H_i^A(\IA_n,\Q)\subset H_i(U_1,\Q)\cong \bigwedge^i U_1\subset H^{2i,i},$
we have 
$H^i_A(\IA_n,\Q)\cong (W_i)^{*},$
which completes the proof.
\end{proof}

\section{Several related conjectures}

We will prove several conjectures related to the $\GL(n,\Q)$-representation structure of the stable Albanese homology of $\IA_n$, which are proposed in \cite{Katada}.

\subsection{The algebra structure of the Albanese cohomology of $\IA_n$}

Here we study the algebra structure of the Albanese cohomology of $\IA_n$.
The cup product gives the algebra structure on the rational cohomology of $\IA_n$ and $\IA_n^{\ab}$, and the Albanese cohomology of $\IA_n$ is a subalgebra of $H^*(\IA_n,\Q)$ generated by $H^1(\IA_n,\Q)$.
We refer the reader to \cite[Section 8]{Katada} for details.

The following theorem directly follows from the proof of Theorem \ref{TheoremAlbaneseIAn}.

\begin{theorem}[Cf.\;{\cite[Conjecture 8.2]{Katada}}]
The Albanese cohomology algebra is stably quadratic in $n\ge 3*$.
That is,
the surjective $\GL(n,\Q)$-equivariant morphism \eqref{surjhom} of graded algebras
    \begin{gather*}
        H^*(U_1,\Q)/{\langle R_2\rangle}\twoheadrightarrow H_A^*(\IA_n,\Q)
    \end{gather*}
    is an isomorphism of $\GL(n,\Q)$-representations for $n\ge 3*$.
\end{theorem}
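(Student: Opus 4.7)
The plan is to extract the statement from the proof of Theorem \ref{TheoremAlbaneseIAn}, which already passes through the intermediate quotient $R_{\text{pres}} = H^*(U_1,\Q)/\langle R_2 \rangle$. Both sides of the map \eqref{surjhom} are algebraic $\GL(n,\Q)$-representations---the source because $U_1^*$ is algebraic and hence so is any quotient of $\bigwedge^* U_1^*$---so it suffices to show that \eqref{surjhom} induces an isomorphism on $[-\otimes H^{p,q}]^{\GL(n,\Z)}$ for enough pairs $(p,q)$ to detect every relevant isotypic multiplicity via Schur--Weyl duality.

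The key point is that for fixed $i$ and every $n \ge \max(3i, p+q)$, the maps assembled in the proof of Theorem \ref{TheoremAlbaneseIAn} factor as
\begin{gather*}
    \calC_{\calP_0^{\circlearrowright}}(p,q)_i \cong (\calP\otimes\det)(p,q)_i \xrightarrow{\alpha_i} \bigl[R_{\text{pres}}\otimes H^{p,q}\bigr]^{\GL(n,\Z)}_i \twoheadrightarrow \bigl[H_A^i(\IA_n,\Q)\otimes H^{p,q}\bigr]^{\GL(n,\Z)},
\end{gather*}
in which $\alpha_i$ is Lindell's surjection built from $\omega(1)$ and the second arrow is induced by \eqref{surjhom}. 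Applying Lemma \ref{lemmaW*inv} together with the identification $H_A^i(\IA_n,\Q) \cong (W_i)^*$ from Theorem \ref{TheoremAlbaneseIAn}, the leftmost and rightmost vector spaces have the same finite dimension, forcing the composite surjection to be an isomorphism and hence each of the two individual surjections to be isomorphisms as well.

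Finally, since both $R_{\text{pres}}^i$ and $H_A^i(\IA_n,\Q)$ are semisimple and embed (up to isomorphism) as subrepresentations of $H^{i,2i}$, only irreducibles $V_{\lambda,\mu}$ with $|\lambda|+|\mu|\le 3i$ can appear in their isotypic decompositions, and each such multiplicity is detected by the invariants $[-\otimes H^{p,q}]^{\GL(n,\Z)}$ for some $p+q\le 3i\le n$. The equality of invariants established in the previous step then gives equality of all isotypic components, so \eqref{surjhom} is an isomorphism of $\GL(n,\Q)$-representations in each degree $i$ with $n \ge 3i$. The one delicate point I expect is to confirm that $\alpha_i$ and the identifications underlying Lemma \ref{lemmaW*inv} compose with \eqref{surjhom} to the Schur--Weyl isomorphism predicted by the chain above, but this ought to follow from their common construction via the invariant element $\omega(1)$ of \eqref{omega}.
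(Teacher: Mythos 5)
Your proposal is correct and follows essentially the same route as the paper, which simply observes that the theorem ``directly follows from the proof of Theorem \ref{TheoremAlbaneseIAn}.'' You have correctly identified the key chain of surjections through $R_{\text{pres}}\cong H^*(U_1,\Q)/\langle R_2\rangle$, used the dimension count (via Lemma \ref{lemmaW*inv} and the isomorphism $H_A^i(\IA_n,\Q)\cong (W_i)^*$) to force each surjection in the chain to be an isomorphism on invariants, and invoked the fact that both $R_{\text{pres}}^i$ and $H_A^i(\IA_n,\Q)$ are semisimple subquotients of $H^{i,2i}$ so that isotypic multiplicities (detected for $p+q\le 3i\le n$) recover the full $\GL(n,\Q)$-isomorphism; this is exactly the argument implicit in the paper's one-sentence proof.
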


\subsection{The coalgebra structure of the Albanese homology of $\IA_n$}

Here we study the coalgebra structure of the Albanese homology of $\IA_n$.
We refer the reader to \cite[Section 7]{Katada} for details.

The rational homology of $\IA_n$ (resp. $\IA_n^{\ab}$) has a natural coalgebra structure, which is predual to the algebra structure of $H^*(\IA_n,\Q)$ (resp. $H^*(\IA_n^{\ab},\Q)$). 
The coalgebra structures of $H^*(\IA_n,\Q)$ and $H^*(\IA_n^{\ab},\Q)$ induce a coalgebra structure on the Albanese homology of $\IA_n$.
We also have a coalgebra structure on the graded-symmetric algebra $S^*(U_*)$ (see \cite[Section 2.6]{Katada} for details).

Let $F_*:H_*(U_1,\Q)\to S^*(U_*)$ be the morphism that appeared in Theorem \ref{TheoremAlbaneseinclusion}.
By \cite[Proposition 7.1]{Katada}, the map $F_*$ is a coalgebra morphism.
Since we have
$$\Prim(S^*(U_*))=U_*=\Prim(W_*),$$
where $\Prim(C)$ denotes the primitive part of a co-augmented coalgebra $C$,
Theorem \ref{TheoremAlbaneseIAn} implies the following.

\begin{corollary}[{\cite[Conjecture 7.2]{Katada}}]
The $\GL(n,\Q)$-equivariant coalgebra morphism $F_*$ restricts to a $\GL(n,\Q)$-equivariant morphism
\begin{gather*}
    F_*:\Prim(H^A_*(\IA_n,\Q))\to U_*,
\end{gather*}
which is an isomorphism for $n\ge 3*$.
\end{corollary}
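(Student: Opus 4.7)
The plan is to derive the corollary as a formal consequence of Theorem \ref{TheoremAlbaneseIAn} together with the elementary fact that a morphism of coaugmented coalgebras sends primitive elements to primitive elements, so that any coalgebra isomorphism restricts to an isomorphism on the primitive parts. The substantive content is all contained in Theorem \ref{TheoremAlbaneseIAn}; the remaining work is to check that primitives behave functorially under the identifications already in play.

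First, I would use that $F_*:H_*(U_1,\Q)\to S^*(U_*)$ is a coalgebra morphism by \cite[Proposition 7.1]{Katada}, that $W_*$ is a subcoalgebra of $S^*(U_*)$, and that $\Prim(W_*)=U_*$, as recalled immediately above the statement. Since any coalgebra morphism carries primitives to primitives, the restriction of $F_*$ to the Albanese subcoalgebra $H^A_*(\IA_n,\Q)\subset H_*(U_1,\Q)$ sends $\Prim(H^A_*(\IA_n,\Q))$ into $\Prim(W_*)=U_*$. Hence $F_*$ already restricts to a well-defined $\GL(n,\Q)$-equivariant morphism $F_*:\Prim(H^A_*(\IA_n,\Q))\to U_*$ with no range condition needed to make sense of it.

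Second, I would invoke Theorem \ref{TheoremAlbaneseIAn}: for each $i$ with $n\ge 3i$, the map $F_i:H^A_i(\IA_n,\Q)\to W_i$ is a $\GL(n,\Q)$-equivariant isomorphism. The bound $n\ge 3i$ in particular implies $n\ge 3j$ for every $j\le i$, so $F_*$ is simultaneously an isomorphism in all homological degrees $\le i$. Since the reduced coproduct on $H^A_*(\IA_n,\Q)$ in degree $i$ lands in tensor products of strictly smaller degrees, it is intertwined by $F_*$ with the reduced coproduct on $W_*$ in that range, and hence $F_i$ restricts to an isomorphism $\Prim(H^A_i(\IA_n,\Q))\xrightarrow{\cong}\Prim(W_i)=U_i$.

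There is no substantive obstacle beyond Theorem \ref{TheoremAlbaneseIAn} itself, whose proof has just been completed; the only point requiring a moment of thought is the stable-range bookkeeping, which is automatic for the reason just given.
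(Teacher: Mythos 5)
Your proposal is correct and takes essentially the same approach as the paper, which derives the corollary in two lines from the facts that $F_*$ is a coalgebra morphism (\cite[Proposition 7.1]{Katada}), that $\Prim(S^*(U_*))=U_*=\Prim(W_*)$, and that $F_i$ is an isomorphism for $n\ge 3i$ by Theorem \ref{TheoremAlbaneseIAn}. Your extra observation that $n\ge 3i$ forces $n\ge 3j$ for all $j\le i$, so the isomorphism in lower degrees needed to intertwine the reduced coproducts is automatic, is a correct piece of bookkeeping that the paper leaves implicit.
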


\subsection{The stable Albanese homology of $\IO_n$}
Let $\Out(F_n)$ denote the outer automorphism group of $F_n$.
Define $\IO_n$ as the kernel of the surjective group homomorphism from $\Out(F_n)$ to $\GL(n,\Z)$, that is, we have a short exact sequence of groups
\begin{gather*}
    1\to \IO_n\to \Out(F_n)\to \GL(n,\Z)\to 1.
\end{gather*}
The Albanese homology of $\IO_n$ is defined in a way similar to $\IA_n$ by
\begin{gather*}
 H^A_*(\IO_n,\Q)=\im (H_*(\IO_n,\Q)\to H_*(\IO_n^{\ab},\Q)).
\end{gather*}
The conjectural structure $W^O_*$ of the stable Albanese homology of $\IO_n$ is constructed as the traceless part of the graded-symmetric algebra $S^*(U^O_*)$ of the graded $\GL(n,\Q)$-representation 
$$U^O_*=\bigoplus_{i\ge 1}U^O_i, \quad
\begin{cases}
    U^O_1=\Hom(H,\bigwedge^2 H)/H\cong V_{1^2,1} & i=1\\
    U^O_i=U_i & i\ge 2.
\end{cases}$$
We refer the reader to \cite[Section 9]{Katada} for details.

By using Theorem \ref{TheoremAlbaneseIAn} and several results in \cite{Katada}, we will determine the stable Albanese homology of $\IO_n$.
The cases of degree $1,2$ and $3$ are proven by \cite{Kawazumi}, \cite{Pettet} and \cite{Katada}, respectively.

\begin{theorem}[Cf.\;{\cite[Conjecture 9.7]{Katada}}]\label{TheoremAlbaneseIO}
    We have an isomorphism of $\GL(n,\Q)$-representations
    \begin{gather*}
        H^A_i(\IO_n,\Q)\cong W^O_i
    \end{gather*}
    for $n\ge 3i$.
\end{theorem}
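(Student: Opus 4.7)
The plan is to deduce the $\IO_n$ statement from the $\IA_n$ statement (Theorem~\ref{TheoremAlbaneseIAn}) by transporting along the canonical surjection $q: \IA_n \twoheadrightarrow \IO_n$, whose kernel is $\Inn(F_n) \cong F_n$.

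First, from \cite[Section 9]{Katada} I would invoke the analog of Theorem~\ref{TheoremAlbaneseinclusion} for $\IO_n$: a $\GL(n,\Q)$-equivariant graded map $F^O_*: H_*(U^O_1,\Q) \to S^*(U^O_*)$, built from abelian cycles in $H_*(\IO_n,\Q)$, satisfying $W^O_* \subset F^O_*(H^A_*(\IO_n,\Q))$ for $n \ge 3*$. This provides the lower bound $\dim H^A_i(\IO_n,\Q) \ge \dim W^O_i$.

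For the matching upper bound, the surjection $q$ induces on abelianizations the quotient map $\pi: U_1 \twoheadrightarrow U^O_1 = U_1/H$, which splits as $\GL(n,\Q)$-representations by reductivity. Naturality yields the commutative square
\begin{gather*}
\xymatrix{
H_i(\IA_n,\Q) \ar[r] \ar[d]_{q_*} & \bigwedge^i U_1 \ar[d]^{\bigwedge^i \pi} \\
H_i(\IO_n,\Q) \ar[r] & \bigwedge^i U^O_1
}
\end{gather*}
so that $(\bigwedge^i\pi)(H^A_i(\IA_n,\Q)) \subset H^A_i(\IO_n,\Q)$. By Theorem~\ref{TheoremAlbaneseIAn} the left-hand side equals $(\bigwedge^i\pi)(W_i)$, and a direct check compatible with the traceless-part construction---using that $U^O_j = U_j$ for $j \ge 2$ and that the splitting $U_1 \cong U^O_1 \oplus H$ is $\GL(n,\Q)$-equivariant---identifies $(\bigwedge^i\pi)(W_i)$ with $W^O_i$. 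Combined with the lower bound, this gives $H^A_i(\IO_n,\Q) = W^O_i$, provided the containment $(\bigwedge^i\pi)(H^A_i(\IA_n,\Q)) \subset H^A_i(\IO_n,\Q)$ is actually an equality.

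The main obstacle is precisely this equality, which amounts to surjectivity of $q_*$ on the Albanese quotients. To establish it, I would use the Hochschild--Serre spectral sequence for $1 \to \Inn(F_n) \to \IA_n \to \IO_n \to 1$: since $\Inn(F_n) \cong F_n$ is free and $\IO_n$ acts trivially on $H = H_1(F_n,\Q)$, the spectral sequence has only two nonzero rows and yields a long exact sequence relating $H_*(\IA_n,\Q)$ to $H_*(\IO_n,\Q)$ via $H_{*-1}(\IO_n,\Q) \otimes H$. Tracking the image in $\bigwedge^* U^O_1$ under abelianization, one should verify that the cokernel of $q_*$ at the Albanese level vanishes. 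As a fallback, one may mimic the invariant-theoretic argument of Theorem~\ref{TheoremAlbaneseIAn} directly for $\IO_n$: a quadratic presentation $H^*(U^O_1,\Q)/\langle R^O_2\rangle \twoheadrightarrow H^*_A(\IO_n,\Q)$ parallel to Lindell's construction, combined with the evident analogs of Lemmas~\ref{lemmaW*} and~\ref{lemmaW*inv} for $W^O_*$, should match the dimensions on both sides.
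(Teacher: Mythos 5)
The paper's proof is a short two-line induction: it cites \cite[Proposition 9.8]{Katada} for a direct-sum decomposition $H^A_i(\IA_n,\Q)\cong H^A_i(\IO_n,\Q)\oplus\bigl(H^A_{i-1}(\IO_n,\Q)\otimes H\bigr)$ valid for $n\ge 2$, cites \cite[Lemma 9.9]{Katada} for the parallel decomposition $W_i\cong W_i^O\oplus\bigl(W_{i-1}^O\otimes H\bigr)$ for $n\ge 3i$, and then inducts on $i$ starting from $H^A_1(\IO_n,\Q)\cong W^O_1$, using Theorem~\ref{TheoremAlbaneseIAn} to identify $H^A_i(\IA_n,\Q)$ with $W_i$ at each stage. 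Your plan is in the same spirit---you also want to exploit the extension $1\to\Inn(F_n)\to\IA_n\to\IO_n\to 1$ via the Hochschild--Serre spectral sequence---but it leaves two genuine gaps that the paper's cited lemmas close.

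First, you explicitly defer the key step (``one should verify that the cokernel of $q_*$ at the Albanese level vanishes'') rather than prove it; this is precisely the content of \cite[Proposition 9.8]{Katada}, which sharpens the bare two-row spectral sequence to an actual $\GL(n,\Q)$-equivariant splitting $H^A_i(\IA_n,\Q)\cong H^A_i(\IO_n,\Q)\oplus\bigl(H^A_{i-1}(\IO_n,\Q)\otimes H\bigr)$. Without that, you have a containment but not the equality you need, and without the extra $H^A_{i-1}(\IO_n)\otimes H$ summand being identified explicitly, the induction does not run. Second, the claim $(\bigwedge^i\pi)(W_i)=W^O_i$ is not well-posed as written: $W_i$ is defined as the traceless part of $S^i(U_*)$ and is a subspace of the target of $F_i$, not of $\bigwedge^i U_1=H_i(U_1,\Q)$, so one cannot literally apply $\bigwedge^i\pi$ to it. What you need instead is the intrinsic decomposition $W_i\cong W_i^O\oplus(W_{i-1}^O\otimes H)$ proved in \cite[Lemma 9.9]{Katada} by working inside $S^*(U_*)$; merely noting that $U_1\cong U^O_1\oplus H$ splits equivariantly does not by itself track the interaction with the traceless-part construction and the higher-degree generators $U_j$. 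Your ``fallback'' of redoing Lindell's quadratic-presentation argument for $\IO_n$ is plausible but would be substantially more work than the paper's induction and is also left unexecuted. In short, the overall strategy is sound and parallels the paper's, but both central verifications are missing, and one of them as stated is ill-defined.
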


\begin{proof}
    If we disregard the stable range, then the statement follows directly from Theorem \ref{TheoremAlbaneseIAn} and \cite[Proposition 9.11]{Katada}. However, we will recall the argument and take care of the stable range.

    We have isomorphisms of $\GL(n,\Q)$-representations
    \begin{gather*}
        H^A_i(\IA_n,\Q)\cong H^A_i(\IO_n,\Q)\oplus (H^A_{i-1}(\IO_n,\Q)\otimes H)
    \end{gather*}
    for $n\ge 2$ by \cite[Proposition 9.8]{Katada}, and 
    \begin{gather*}
        W_i\cong W_i^{O}\oplus (W_{i-1}^O\otimes H)
    \end{gather*}
    for $n\ge 3i$ by \cite[Lemma 9.9]{Katada}.
    Since we have $H^A_1(\IO_n,\Q)\cong W_1^O$, the statement follows by induction on $i$.
\end{proof}

In the case of $\IA_n$, the polynomiality of the dimension of the stable Albanese homology was known in \cite{CEF}. By Theorem \ref{TheoremAlbaneseIAn} and an irreducible decomposition of $W_*$, it is possible to compute the exact polynomial.
It seems natural to expect the same thing holds for $\IO_n$, but the author has not found any literature about the polynomiality of the stable Albanese homology of $\IO_n$. By Theorem \ref{TheoremAlbaneseIO}, we obtain the polynomiality in the case of $\IO_n$ as well.

\begin{corollary}[Cf.\;{\cite[Conjecture 9.6]{Katada}}]
    There is a polynomial $P^O_i(T)$ of degree $3i$ such that we have 
    $\dim_{\Q}(H^A_i(\IO_n,\Q))=P^O_i(n)$ for $n\ge 3i$.
\end{corollary}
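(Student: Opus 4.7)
The plan is to reduce the dimension count via Theorem \ref{TheoremAlbaneseIO} and then to exploit the explicit construction of $W^O_i$ as the degree-$i$ part of $\widetilde{S}^*(U^O_*)$, together with the classical polynomiality of $\dim_\Q V_{\lambda,\mu}$ in $n$.

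First, Theorem \ref{TheoremAlbaneseIO} gives $\dim_\Q H^A_i(\IO_n,\Q)=\dim_\Q W^O_i$ for $n\ge 3i$, so the task reduces to showing that $n\mapsto \dim_\Q W^O_i$ agrees with a polynomial of degree $3i$ in this range. Since $W^O_i$ is built by polynomial functors (tensor products, graded-symmetric powers, traceless projections) applied to $H$, in the stable range its decomposition into irreducible algebraic $\GL(n,\Q)$-representations $V_{\lambda,\mu}$ has multiplicities independent of $n$; this can be verified by the same bookkeeping as in \cite[Section 6.1]{Katada} combined with the analogue of Lemma \ref{lemmaW*} for the outer case. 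By the Weyl dimension formula, whenever $n\ge l(\lambda)+l(\mu)$, the dimension $\dim_\Q V_{\lambda,\mu}$ is a polynomial in $n$ of degree exactly $|\lambda|+|\mu|$, so summing over the finitely many irreducible components yields the desired polynomial $P^O_i(T)$.

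Second, I would pin down $\deg P^O_i=3i$ by locating the summand of $W^O_i$ that maximizes $|\lambda|+|\mu|$. A monomial in $\widetilde{S}^*(U^O_*)$ of total grading $i$ has the form $U^O_{j_1}\otimes\cdots\otimes U^O_{j_k}$ with $j_1+\cdots+j_k=i$; the summand $U^O_1=V_{1^2,1}$ contributes $3$ to $|\lambda|+|\mu|$, while $U^O_j\hookrightarrow H^*\otimes \bigwedge^{j+1}H$ contributes at most $j+2<3j$ for $j\ge 2$. Since the traceless tensor product is additive in $|\lambda|+|\mu|$, a short bookkeeping argument shows that the maximum value $3i$ is attained exactly by the summand $\widetilde{S}^i(V_{1^2,1})\subset W^O_i$.

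The main obstacle is to confirm the non-vanishing of this top summand $\widetilde{S}^i(V_{1^2,1})$ in the stable range, which ensures that the leading coefficient of $P^O_i$ is non-zero. Since $V_{1^2,1}$ sits in cohomological degree $1$, this graded-symmetric power equals the traceless antisymmetric power $\widetilde{\bigwedge}^i V_{1^2,1}$, whose non-triviality for $n\ge 3i$ should follow from a short plethysm verification producing an explicit irreducible component of size $3i$. Together with the multiplicity-stability of $W^O_*$ extracted from its functorial description in Section 9 of \cite{Katada}, these steps establish the corollary.
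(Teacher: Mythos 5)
Your approach matches the paper's (implicit) argument: reduce to $W^O_i$ via Theorem \ref{TheoremAlbaneseIO}, invoke multiplicity stability of the irreducible decomposition for $n\ge 3i$, and use the fact that $\dim_\Q V_{\lambda,\mu}$ is a polynomial of degree $|\lambda|+|\mu|$ in $n$; the paper itself supplies no more than this. Your bookkeeping that the top degree $3i$ comes from the summand $\widetilde{S}^i(V_{1^2,1})=\widetilde{\bigwedge}^i V_{1^2,1}$ (since $U^O_j$ for $j\ge 2$ contributes only size $\le j+2 < 3j$, and traceless tensor product is additive in size) is correct and is genuinely additional detail beyond what the paper writes. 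The one point you flag but do not close — non-vanishing of $\widetilde{\bigwedge}^i V_{1^2,1}$ for $n\ge 3i$ — is a real step but easy: for $n\ge 3i$ the decomposable tensor $(e_1\wedge e_2\otimes e_n^*)\wedge\cdots\wedge(e_{2i-1}\wedge e_{2i}\otimes e_{n-i+1}^*)$ is a non-zero element of $\bigwedge^i V_{1^2,1}$ lying in $T_{2i,i}$ because all indices of $e$'s and $e^*$'s are disjoint, so every contraction vanishes. Alternatively, combine the decomposition $W_i\cong W^O_i\oplus(W^O_{i-1}\otimes H)$ from \cite[Lemma 9.9]{Katada} with the known degree $3i$ polynomiality of $\dim W_i$ from the $\IA_n$ case and induct on $i$. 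Either way the leading term is secured and your proof is complete.
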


\subsection{Relation between the stable Albanese cohomology of $\IA_n$ and the stable twisted cohomology of $\Aut(F_n)$}

Here we will study the relation between the stable Albanese cohomology of $\IA_n$ and the stable cohomology of $\Aut(F_n)$ with coefficients in $H^{p,q}$.

The stability of the twisted homology of $\Aut(F_n)$ was shown by Randal-Williams--Wahl \cite{RWW}.
We will use the recent improvement of the stable range in \cite{MPPRW}.

\begin{theorem}[Miller--Patzt--Petersen--Randal-Williams\;{\cite[Theorem 1.2]{MPPRW}}]\label{stablerangeimproved}
    For any bipartition $(\lambda,\mu)$, the homology group $H_i(\Aut(F_n),V_{\lambda,\mu})$ stabilizes for $n\ge 3i+4$.
\end{theorem}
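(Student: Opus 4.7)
Since this is cited as \cite[Theorem 1.2]{MPPRW}, the statement is an external input rather than something the present paper proves; still, let me sketch how I would approach it. The natural framework is the homological stability machinery of Randal-Williams--Wahl \cite{RWW}, which takes as input a homogeneous category built from a braided monoidal structure and outputs stability ranges for twisted homology. For the sequence $\{\Aut(F_n)\}_n$, one uses the category whose monoidal product is the free product $F_n * F_m \cong F_{n+m}$, so that $\Aut(F_n)$ is the automorphism group of the $n$-th object.

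First, I would verify that the assignment $n \mapsto V_{\lambda,\mu}$, pulled back via $\Aut(F_n) \to \GL(n,\Z)$, assembles into a \emph{polynomial coefficient system} on this category, of degree bounded in terms of $|\lambda|+|\mu|$. This is a routine verification, using that $V_{\lambda,\mu}$ sits inside $H^{\otimes(|\lambda|+|\mu|)}$ and that $H = H_1(F_n,\Q)$ itself is a degree-one polynomial functor over the free-product monoidal structure. The Randal-Williams--Wahl apparatus then produces a spectral sequence whose convergence -- and hence stability of $H_i(\Aut(F_n), V_{\lambda,\mu})$ -- is controlled by the high-connectivity of a certain semi-simplicial complex of splittings of $F_n$, closely related to the Hatcher--Vogtmann complexes of sphere systems.

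The central technical input, which I expect to be the main obstacle, is sharpening the connectivity bound on this complex so as to yield slope $3$ rather than the classical slope $2$ coming out of \cite{RWW}. This is precisely the contribution of \cite{MPPRW}: a refined connectivity analysis, likely exploiting a secondary simplicial structure or a delicate bad-simplex / Cohen--Macaulay-type argument, from which the polynomial coefficient spectral sequence automatically upgrades the improved connectivity to the stable range $n \ge 3i+4$, with the constant $+4$ absorbing the polynomial degree shift and the base case. For the purposes of the present paper, it suffices to invoke the theorem as a black box and combine it with the identifications of invariants established in Lemmas \ref{Propimprovedinvariant} and \ref{lemmaW*inv}.
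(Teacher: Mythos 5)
The paper offers no proof of this statement; it is a verbatim import of \cite[Theorem 1.2]{MPPRW}, invoked as a black box, and you correctly recognize this. Your decision to treat it as an external input and just gesture at how it is established is exactly what the paper itself does, so there is no substantive gap as far as the present paper is concerned.

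One caveat about the sketch you offer for the sake of intuition: the slope-$3$ uniform range in \cite{MPPRW} is not obtained by ``sharpening the connectivity bound'' on a splitting complex, as you suggest. The relevant simplicial complexes for $\Aut(F_n)$ already have the connectivity needed for slope-$2$ untwisted stability, and in the standard Randal-Williams--Wahl machine the range for a polynomial coefficient system degrades with the polynomial degree (roughly $n\ge 2i+2d+c$ for degree $d$). The point of \cite{MPPRW} is precisely to make the range \emph{uniform} in the coefficient system, i.e.\ independent of $|\lambda|+|\mu|$; this is achieved not by a better connectivity estimate but by a different argument on the coefficient side, resolving polynomial coefficient systems so as to bootstrap from the known untwisted (and low-degree twisted) stability. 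Since the present paper only needs the statement as a black box to feed into Theorem~\ref{AutIA}, this inaccuracy is harmless, but your attribution of the improvement to ``refined connectivity analysis'' misidentifies where the gain actually comes from.
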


By using this improved stable range, we will prove the following.

\begin{theorem}[Cf.\;{\cite[Conjectures 12.5 and 12.6]{Katada}}]\label{AutIA}
    We have an isomorphism of $\Q[\gpS_p\times \gpS_q]$-modules
    \begin{gather*}
        H^i(\Aut(F_n),H^{p,q})\cong [H_A^i(\IA_n,\Q)\otimes H^{p,q}]^{\GL(n,\Z)}
    \end{gather*}
    for 
    $n\ge \min(\max(3i+4, p+q), 2i+p+q+3)$.
    The statement also holds for coefficients in $V_{\lambda,\mu}$ with $|\lambda|=p, |\mu|=q$ instead of $H^{p,q}$.
\end{theorem}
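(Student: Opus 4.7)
The plan is to analyze the edge homomorphism of the Hochschild--Serre spectral sequence attached to the extension $1\to \IA_n\to \Aut(F_n)\to \GL(n,\Z)\to 1$ with coefficients in $H^{p,q}$,
\begin{gather*}
E_2^{s,t}=H^s(\GL(n,\Z),H^t(\IA_n,\Q)\otimes H^{p,q})\Longrightarrow H^{s+t}(\Aut(F_n),H^{p,q}),
\end{gather*}
whose edge map into $E_\infty^{0,i}\subset [H^i(\IA_n,\Q)\otimes H^{p,q}]^{\GL(n,\Z)}$ is the restriction $i^*$. First I would identify the expected target: by Theorem \ref{TheoremAlbaneseIAn} the space $H_A^i(\IA_n,\Q)\cong (W_i)^*$ is algebraic in the range $n\ge 3i$, and combining this with Lemma \ref{lemmaW*inv} yields
\begin{gather*}
[H_A^i(\IA_n,\Q)\otimes H^{p,q}]^{\GL(n,\Z)}\cong \calC_{\calP_0^{\circlearrowright}}(p,q)_i
\end{gather*}
for $n\ge \max(3i,p+q)$. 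This is the predicted stable value of the right-hand side of the theorem.

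Second, I would match this candidate against the stable cohomology of $\Aut(F_n)$ with coefficients in $H^{p,q}$. Theorem \ref{stablerangeimproved} provides the improved stability $n\ge 3i+4$ on every $V_{\lambda,\mu}$-isotypic piece, and the stable value is known to admit a description in terms of the wheeled PROP $\calC_{\calP_0^{\circlearrowright}}(p,q)_i$ by the functor-homology / graph-cohomology techniques that already underlie \cite[Proposition 6.3]{LindellK}. To upgrade the resulting dimension equality to an isomorphism induced by $i^*$, one uses naturality of the spectral sequence to argue that the image of the edge map factors through the Albanese invariants, and then compares dimensions. The two branches of the stable range arise from different bookkeeping choices: $\max(3i+4,p+q)$ combines the improved stability of Theorem \ref{stablerangeimproved} with the invariant-theory bound $n\ge p+q$ required by Lemma \ref{Propimprovedinvariant} (to decompose $H^{p,q}$ into traceless pieces), while $2i+p+q+3$ comes from applying the earlier Randal-Williams--Wahl stability directly to $H^{p,q}$ without first splitting it into isotypic components. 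Running the same argument $V_{\lambda,\mu}$-component-by-$V_{\lambda,\mu}$-component yields the parenthetical refinement.

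The main obstacle will be ruling out extra contributions to $i^*$ coming from the non-Albanese part of $H^*(\IA_n,\Q)$, which is not known to be algebraic in general. A priori the spectral sequence could produce classes in $H^i(\Aut(F_n),H^{p,q})$ that restrict to non-Albanese invariants in $[H^i(\IA_n,\Q)\otimes H^{p,q}]^{\GL(n,\Z)}$, and higher rows $E_2^{s,t}$ with $s\ge 1$ could contribute nontrivially to the abutment. Both possibilities must be excluded in the stable range: the former by the dimension comparison against the stable $\Aut(F_n)$-twisted cohomology described above, and the latter by Borel vanishing applied to the algebraic Albanese part, combined with the stability of Theorem \ref{stablerangeimproved}.
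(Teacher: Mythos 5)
The paper's proof is much more direct than what you propose and avoids the difficulties you flag. It does not analyze the Hochschild--Serre spectral sequence at all. Instead, it computes both sides abstractly and observes they coincide: by Lindell's earlier result \cite[Theorem A]{Lindell} the left-hand side $H^i(\Aut(F_n),H^{p,q})$ is isomorphic to $(\calP\otimes\det)(p,q)_i$ for $n\ge 2i+p+q+3$ (this is where that branch of the range actually comes from, not from applying Randal-Williams--Wahl to $H^{p,q}$ directly), and since the target is independent of $n$, Theorem~\ref{stablerangeimproved} extends the isomorphism to $n\ge\max(3i+4,p+q)$; by Theorem~\ref{TheoremAlbaneseIAn} and Lemma~\ref{lemmaW*inv} the right-hand side is also isomorphic to $\calC_{\calP_0^{\circlearrowright}}(p,q)_i\cong(\calP\otimes\det)(p,q)_i$ for $n\ge\max(3i,p+q)$. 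Note that the statement of Theorem~\ref{AutIA} only asserts an abstract isomorphism of $\Q[\gpS_p\times\gpS_q]$-modules; the assertion that this isomorphism is realized by $i^*$ is the content of the separate Theorem~\ref{TheoremAutIA}, which is proved afterwards by showing both wheeled PROPs are generated by the Johnson class. You are conflating the two.

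The genuine gap in your proposal is in the last paragraph. To kill the rows $E_2^{s,t}$ with $s\ge 1$ you invoke ``Borel vanishing applied to the algebraic Albanese part,'' but those $E_2$-terms involve the full $H^t(\IA_n,\Q)\otimes H^{p,q}$, not just its Albanese subquotient, and $H^t(\IA_n,\Q)$ is not known to be algebraic (this is precisely the open hypothesis that appears in Theorem~\ref{HKtheorem}). Borel vanishing for the Albanese part alone says nothing about the complementary summand, so the spectral-sequence route as sketched cannot close without exactly the kind of open assumption the paper is careful to avoid. Re-deriving the stable value of $H^i(\Aut(F_n),H^{p,q})$ from scratch via the spectral sequence is in effect re-proving Lindell's theorem; the paper's proof simply cites it and compares the two PROP descriptions.
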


\begin{proof}
By \cite[Theorem A]{Lindell}, for $n\ge 2i+p+q+3$, we have an isomorphism of $\Q[\gpS_p\times \gpS_q]$-modules
    \begin{gather}\label{isomlindell}
     H^i(\Aut(F_n),H^{p,q})\cong (\calP\otimes \det)(p,q)_i.
    \end{gather}
Since $\calP\otimes \det$ is independent of $n$, it follows from Theorem \ref{stablerangeimproved} that the isomorphism \eqref{isomlindell} holds for $n\ge \max(3i+4,p+q)$.
On the other hand, by Theorem \ref{TheoremAlbaneseIAn} and Lemma \ref{lemmaW*inv}, we have isomorphisms of $\Q[\gpS_p\times \gpS_q]$-modules
\begin{gather*}
    [H_A^i(\IA_n,\Q)\otimes H^{p,q}]^{\GL(n,\Q)}\cong [(W_i)^*\otimes H^{p,q}]^{\GL(n,\Q)}\cong \calC_{\calP_0^{\circlearrowright}}(p,q)_i\cong (\calP\otimes \det)(p,q)_i
\end{gather*}
for $n\ge \max(3i,p+q)$, which completes the proof.
\end{proof}

\begin{remark}
    For the non-vanishing case $p-q=i$, we have 
    $$\min(\max(3i+4, p+q), 2i+p+q+3)=
    \begin{cases}
        \max(3i+4, p+q) & q\ge 1,\\
        3i+3 & q=0.
    \end{cases}$$
\end{remark}

Moreover, in \cite{HK}, Habiro and the author conjectured that the isomorphism in Theorem \ref{AutIA} is realized by the morphism of $\Q[\gpS_p\times \gpS_q]$-modules
\begin{gather*}
  i^*:  H^*(\Aut(F_n),H^{p,q}) \to H^*(\IA_n,H^{p,q})^{\GL(n,\Z)}=[H^*(\IA_n,\Q)\otimes H^{p,q}]^{\GL(n,\Z)}
\end{gather*}
that is induced by the inclusion map $i:\IA_n\hookrightarrow \Aut(F_n)$.
To prove this conjecture, we will construct a wheeled PROP corresponding to $[H_A^*(\IA_n,\Q)\otimes H^{p,q}]^{\GL(n,\Z)}$ in a way similar to the wheeled PROP $\calH$ corresponding to $H^*(\Aut(F_n),H^{p,q})$ that was introduced in \cite{Kawazumi--Vespa}.

In what follows, let $H(n)$ denote $H=H_1(F_n,\Q)$ to make the dependence on $n$ explicit.
The inclusion map $F_n\hookrightarrow F_{n+1}$ that maps the generator $x_i\in F_n$ to $x_i\in F_{n+1}$ for $1\le i\le n$ induces an inclusion map $\IA_n\hookrightarrow \IA_{n+1}$.
We have the projection $H(n+1)\twoheadrightarrow H(n)$
that sends the basis element $e_i\in H(n+1)$ to $e_i\in H(n)$ for $1\le i\le n$ and $e_{n+1}\in H(n+1)$ to $0$.
We also have the projection
$H(n+1)^*\twoheadrightarrow H(n)^*$
that sends the dual basis element $e_i^*\in H(n+1)^*$ to $e_i^*\in H(n)^*$ and $e_{n+1}^*\in H(n+1)^*$ to $0$.
These three maps induce the linear map
\begin{gather*}
    [H^*_A(\IA_{n+1},\Q)\otimes H^{p,q}(n+1)]^{\GL(n+1,\Z)}\to [H^*_A(\IA_n,\Q)\otimes H^{p,q}(n)]^{\GL(n,\Z)}
\end{gather*}
and the stable $\GL(n,\Z)$-invariant part of the twisted Albanese cohomology is the limit $\varprojlim_{n} [H^*_A(\IA_n,\Q)\otimes H^{p,q}(n)]^{\GL(n,\Z)}$.
By Theorem \ref{TheoremAlbaneseIAn}, we have
\begin{gather*}
    \varprojlim_{n} [H^i_A(\IA_n,\Q)\otimes H^{p,q}(n)]^{\GL(n,\Z)}\cong [H^i_A(\IA_n,\Q)\otimes H^{p,q}(n)]^{\GL(n,\Z)}
\end{gather*}
for $n\ge \max(3i,p+q)$.

Define a wheeled PROP $\calA$ as follows.
The morphisms are the graded $\Q[\gpS_p\times \gpS_q]$-modules
\begin{gather*}
   \calA(p,q)=\varprojlim_{n} [H^*_A(\IA_n,\Q)\otimes H^{p,q}(n)]^{\GL(n,\Z)},
\end{gather*}
where the actions of $\gpS_p$ and $\gpS_q$ are given by place permutations of the copies of $H$ and $H^*$.
The horizontal composition
\begin{gather*}
    \calA(p_1,q_1)\otimes \calA(p_2,q_2)\to \calA(p_1+p_2,q_1+q_2)
\end{gather*}
is induced by the cup product map for the cohomology of $\IA_n$, and the vertical composition
\begin{gather*}
    \calA(q,r)\otimes \calA(p,q)\to \calA(p,r)
\end{gather*}
is induced by the composition of the cup product map for the cohomology of $\IA_n$ and the map $H^{q,r}\otimes H^{p,q}\cong \Hom(H^{\otimes r},H^{\otimes q})\otimes \Hom(H^{\otimes q},H^{\otimes p})\to H^{p,r}$ defined by the composition of linear maps.
The contraction map
\begin{gather*}
    \xi_j^i: \calA(p,q)\to \calA(p-1,q-1)
\end{gather*}
is induced by the contraction map $c_{i,j}: H^{p,q}\to H^{p-1,q-1}$.
We refer the reader to \cite[Definition 6.1, Proposition 6.2]{Kawazumi--Vespa}.

\begin{theorem}[Cf.\;{\cite[Conjecture 7.2]{HK}}]\label{TheoremAutIA}
    The inclusion map $i: \IA_n\hookrightarrow \Aut(F_n)$ induces an isomorphism of wheeled PROPs 
    \begin{gather*}
        i^*: \calH\xrightarrow{\cong} \calA,
    \end{gather*}
    and thus induces an isomorphism of $\Q[\gpS_p\times \gpS_q]$-modules
    \begin{gather*}
        i^*:H^*(\Aut(F_n),H^{p,q}) \to [H_A^*(\IA_n,\Q)\otimes H^{p,q}]^{\GL(n,\Z)}
    \end{gather*}
    for $n\ge \min(\max(3*+4, p+q), 2*+p+q+3)$.
\end{theorem}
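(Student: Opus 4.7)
The plan is to realize the abstract isomorphism of Theorem \ref{AutIA} by the natural map $i^*$ induced by the inclusion $i$, and then to upgrade this to an isomorphism of wheeled PROPs. First I would identify $i^*$ with the edge homomorphism of the Lyndon-Hochschild-Serre spectral sequence for the extension $1\to \IA_n\to \Aut(F_n)\to \GL(n,\Z)\to 1$ with coefficients in $H^{p,q}$. Since $\IA_n$ acts trivially on $H^{p,q}$, the $E_2$-page is $E_2^{s,t}=H^s(\GL(n,\Z),H^t(\IA_n,\Q)\otimes H^{p,q})$, and the edge map in total degree $i$ is precisely the natural map
\[
i^*:H^i(\Aut(F_n),H^{p,q})\to [H^i(\IA_n,\Q)\otimes H^{p,q}]^{\GL(n,\Z)}.
\]

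Next, combining the tautological inclusion $[H^i_A(\IA_n,\Q)\otimes H^{p,q}]^{\GL(n,\Z)}\hookrightarrow [H^i(\IA_n,\Q)\otimes H^{p,q}]^{\GL(n,\Z)}$ with the dimension equality supplied by Theorem \ref{AutIA}, the two $\GL(n,\Z)$-invariant subspaces must coincide in the stable range; so it suffices to show that $i^*$ is an isomorphism onto the full right-hand side. I would establish this by proving that the spectral sequence collapses at $E_2$ onto the $s=0$ column, which amounts to Borel-type vanishing of $H^s(\GL(n,\Z),-)$ on the relevant algebraic coefficients for $s\ge 1$. The quantitative range would come from combining Theorem \ref{stablerangeimproved} with Lindell's identification \cite[Theorem A]{Lindell}.

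For the wheeled PROP claim, I would verify that $i^*$ respects each generating operation. Cup products are natural under the group homomorphism $i$, so horizontal and vertical compositions are preserved; the contractions $\xi_j^i$ act only through the $\GL(n,\Z)$-equivariant maps $c_{i,j}:H^{p,q}\to H^{p-1,q-1}$ on the coefficient module, which commute with $i^*$ trivially. Being a levelwise isomorphism that intertwines the structure operations, $i^*:\calH\to\calA$ is then an isomorphism of wheeled PROPs.

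The chief obstacle will be securing the spectral-sequence collapse within the sharp stable range $n\ge \min(\max(3*+4,p+q),2*+p+q+3)$ of the statement, since two distinct stability mechanisms must be patched together to attain it. As a fallback, one could bypass direct Borel vanishing by exploiting the identification $\calH\cong \calC_{\calP_0^{\circlearrowright}}$ as wheeled PROPs together with our identification of $\calA$ with $\calC_{\calP_0^{\circlearrowright}}$ via Lemma \ref{lemmaW*inv}, reducing the problem to checking $i^*$ on a generating set of $\calC_{\calP_0^{\circlearrowright}}$, where it is implemented by cup products of elements of $H^1(\IA_n,\Q)=H^1_A(\IA_n,\Q)$ and so is manifestly an isomorphism with image inside the Albanese part.
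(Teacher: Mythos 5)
Your primary approach has a genuine gap. The spectral-sequence collapse onto the $s=0$ column requires Borel vanishing $H^s(\GL(n,\Z),H^t(\IA_n,\Q)\otimes H^{p,q})=0$ for $s\ge 1$, which in turn requires the coefficients $H^t(\IA_n,\Q)$ to be algebraic $\GL(n,\Z)$-representations (stably). This is precisely the open hypothesis that appears in Theorem~\ref{HKtheorem} and is \emph{not} available unconditionally; the paper's proof carefully routes around it and never invokes the full cohomology $H^*(\IA_n,\Q)$, only the Albanese part. So as stated, your main line of argument proves a conditional result, not the theorem itself.

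Your fallback is much closer to the paper's actual strategy, but it still leaves the crucial step undone. The paper establishes that (i) $\calH$ is generated as a wheeled PROP by Kawazumi's class $h_1\in H^1(\Aut(F_n),H^{2,1})$ (by \cite{Kawazumi--Vespa,Lindell}), (ii) $i^*(h_1)=\tau$, the Johnson homomorphism class, which is nonzero, and (iii) $\calA$ is generated as a wheeled PROP by $\tau$. Step (iii) is the heart of the surjectivity argument: the Albanese cohomology is generated in degree $1$ as an algebra, so one must check that the degree-$1$ parts $[H^1_A(\IA_n,\Q)\otimes H^{p,q}]^{\GL(n,\Z)}$ for all $(p,q)$ are reached from $\tau$ by contractions and horizontal composition with $\id_H$ --- the paper does this by explicit case analysis ($p-q\neq 1$ vanishes; $p=1,q=0$; $p=2,q=1$; $p\ge 3,q=p-1$). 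Your statement ``it is implemented by cup products of elements of $H^1(\IA_n,\Q)=H^1_A(\IA_n,\Q)$ and so is manifestly an isomorphism'' skips exactly this verification. Note also that Lemma~\ref{lemmaW*inv} identifies $\calA(p,q)$ with $\calC_{\calP_0^{\circlearrowright}}(p,q)$ only as $\Q[\gpS_p\times\gpS_q]$-modules, not a priori as wheeled PROPs, so one cannot simply transport generation from $\calC_{\calP_0^{\circlearrowright}}$ without tracking where the generator lands under $i^*$. Once surjectivity of $i^*$ is in hand, the isomorphism follows from the dimension equality of Theorem~\ref{AutIA}, as you and the paper both observe.
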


\begin{proof}
Since we have observed the stability of $H^*(\Aut(F_n),H^{p,q})$ and $[H_A^*(\IA_n,\Q)\otimes H^{p,q}]^{\GL(n,\Z)}$, by Theorem \ref{AutIA}, it suffices to check that the inclusion map $i: \IA_n\hookrightarrow \Aut(F_n)$ induces a surjective morphism of wheeled PROPs $i^*: \calH\twoheadrightarrow \calA$.

The Johnson homomorphism for $\Aut(F_n)$ is the group homomorphism
\begin{gather*}
    \tau: \IA_n\to \Hom(H,{\bigwedge}^2 H)
\end{gather*}
defined by $\tau(f)([x])=[f(x)x^{-1}]$ for $f\in \IA_n$ and $x\in F_n$, where $[x]$ denotes the image under the projection $F_n\twoheadrightarrow H$ and $[f(x)x^{-1}]$ the image under the projection $[F_n,F_n]\twoheadrightarrow {\bigwedge}^2 H$.
By abuse of the notation, let $\tau\in H^1(\IA_n,H^{2,1})$ denote the cohomology class that is represented by the composition map of the Johnson homomorphism and the inclusion $\Hom(H,{\bigwedge}^2 H)\hookrightarrow H^{2,1}$.
We can check that the cohomology class $\tau$ is non-trivial for $n\ge 3$ by using the dual pairing 
\begin{gather*}
    \langle,\rangle: H^1(\IA_n,H^{2,1})\otimes H_1(\IA_n,H^{1,2})\to \Q
\end{gather*}
defined by $\langle [f], [g\otimes x]\rangle =f(g)(x)$ for $f: \IA_n\to H^{2,1}\cong \Hom_{\Q}(H^{1,2},\Q)$ and $g\in \IA_n$, $x\in H^{1,2}$, where $[f]$ (resp. $[g\otimes x]$) denotes the cohomology class (resp. homology class).

Kawazumi \cite{Kawazumi} extended the Johnson homomorphism to a cohomology class $h_1\in H^1(\Aut(F_n),H^{2,1})$, which is non-trivial.
Therefore, we have
\begin{gather}\label{Kawazumicocycle}
    i^*(h_1)=\tau\in H^1(\IA_n,H^{2,1})^{\GL(n,\Z)}.
\end{gather}
By \cite{Kawazumi--Vespa} and \cite{Lindell}, $\calH$ is generated by $h_1$ as a wheeled PROP, which means that for any $p,q\ge 0$, the hom-space $\calH(p,q)$ is a $\Q[\gpS_p\times \gpS_q]$-module spanned by morphisms obtained from some copies of $h_1$ and $\id_H\in H^{1,1}$ by the horizontal composition, the vertical composition and the contraction map of $\calH$.
Since the map $i^*$ sends the generator $h_1$ of $\calH$ to $\tau\in \calA(2,1)$, in order to prove that $i^*$ is surjective, we have only to prove that $\calA$ is generated by $\tau$ as a wheeled PROP.

The Albanese cohomology of $\IA_n$ is generated by the first cohomology as an algebra.
Therefore, $\calA$ is generated by the degree $1$ part $[H^1_A(\IA_n,\Q)\otimes H^{p,q}]^{\GL(n,\Z)}=H^1(\IA_n,H^{p,q})^{\GL(n,\Z)}$.
For $p,q\ge 0$ with $p-q\neq 1$, we have $H^1(\IA_n,H^{p,q})^{\GL(n,\Z)}=0$ for $n\ge \max(3,p+q)$.
For $p=1,q=0$, we have
$$H^1(\IA_n,H^{1,0})^{\GL(n,\Z)}=\Q \xi^{1}_{1}(\tau)$$
for $n\ge 3$.
For $p=2,q=1$, we have
$$H^1(\IA_n,H^{2,1})^{\GL(n,\Z)}= \Q \{\tau, \xi^{1}_{1}(\tau)\cup \id_{H}, \id_{H}\cup \xi^{1}_{1}(\tau)\}$$ for $n\ge 3$.
For $p\ge 3$, $q=p-1$, the horizontal composition map
$$H^1(\IA_n,H^{2,1})^{\GL(n,\Z)}\otimes (H^{p-2,p-2})^{\GL(n,\Z)}\to H^1(\IA_n,H^{p,p-1})^{\GL(n,\Z)}$$
is surjective for $n\ge 2p-1$.
Therefore, $\calA$ is generated by $\tau$ as a wheeled PROP, which completes the proof.
\end{proof}

\end{document}